\newcommand{\mb}{\mathbb}
\newcommand{\mc}{\mathcal}
\newcommand{\K}{{\mc K}}
\newcommand{\eul}{\mathfrak}
\newcommand{\A}{\eul A}
\newcommand{\Ao}{{\eul A}_{\scriptscriptstyle 0}}
\newcommand{\M}{\mc M}
\newcommand{\D}{\mc D}
\newcommand{\F}{\mc F}
\newcommand{\QA}{{\mathcal Q}_{\Ao}(\A)}
\newcommand{\IA}{{\mathcal I}_{\Ao}(\A)}
\newcommand{\PA}[1]{{\mathcal P}_{\Ao}^{#1}(\A)}
\def\H{\mc H}
\newcommand{\idop}{{\mb I}}
\newcommand{\id}{{\sf e }}
\newcommand{\vp}{\varphi}
\newcommand{\wmultt}{\mult}
\newcommand{\mult}{\mbox{\raisebox{1pt}{$\scriptscriptstyle{	\square}$}}} 
\newcommand{\wmult}{\mbox{\raisebox{1pt}{$\scriptscriptstyle{
				\diamond}$}}}
			\newcommand{\Lc}{{\mc L}}
			\newcommand{\ad}{^{\mbox{\scriptsize $\dag$}}}
\newcommand{\LDH}{{\mathcal L}\ad(\D,\H)}
\newcommand{\ze}{_{\scriptscriptstyle 0}}
\def\x{\relax\ifmmode {\mbox{*}}\else*\fi}
\newcommand{\LD}{{\mc L}\ad(\D)}
\newcommand{\up}{\raisebox{0.7mm}{$\upharpoonright$}}
\newcommand{\ip}[2]{\langle{#1}|{#2}\rangle}
\newcommand{\LDb}{{\mc L}\ad(\D)_b}
\newcommand{\sft}{{\sf t}\!{\sf t}}
\newcommand{\B}{{\eul B}}
\newcommand{\mB}{\mc B}
\newcommand{\ltaum}{\tau}
\newcommand{\FF}{\mathfrak F}
\newcommand{\LpDr}{{\mathcal L}\ad(\D_{\scriptscriptstyle\pi})}
\newcommand{\KII}{\K_{\scriptscriptstyle{{\mc I}}}}
\newcommand{\KI}{\K_{\scriptscriptstyle{\M}}}
\newcommand{\bdd}{\A_{\scriptscriptstyle{\rm b}}(\KI)}
\newcommand{\bdds}{\A_{\scriptscriptstyle{\rm b}}^{\scriptscriptstyle{\M}}}
\newcommand{\LDHr}{{\mathcal L}\ad(\D_\pi,\H_\pi)}
\newcommand{\leqm}{\leq_{\scriptscriptstyle{\M}}}
\newcommand{\taum}{\tau^{\scriptscriptstyle{\M}}}
\newcommand{\rnb}{^{\scriptscriptstyle{\M}}_{\scriptscriptstyle{\rm b}}}
\newcommand{\mrep}{{\sf Rep}^{r, \M}(\A,\Ao)}
\newcommand{\irep}{{\sf Rep}^{r}(\A,\Ao)}
\newcommand{\BH}{\mc{B}(\mathcal{H})}
\newtheorem{theorem}{Theorem}[section]
\newtheorem{lemma}[theorem]{Lemma}
\newtheorem{proposition}[theorem]{Proposition}
\theoremstyle{definition}
\newtheorem{definition}[theorem]{Definition}
\newtheorem{remark}[theorem]{Remark}
\newtheorem{example}[theorem]{Example}
\numberwithin{equation}{section}
\begin{document}


\baselineskip=17pt


\title[Topological aspects of quasi *-algebras]{Topological aspects of quasi *-algebras with sufficiently many *-representations}

\author[G. Bellomonte]{Giorgia Bellomonte}

\email{giorgia.bellomonte@unipa.it}

\author[C. Trapani]{Camillo Trapani}

\email{camillo.trapani@unipa.it}

\address{Dipartimento di Matematica e Informatica, Universit\`a degli Studi  di Palermo, Via Archirafi n. 34,  I-90123 Palermo, Italy}
\date{}

\begin{abstract}Quasi *-algebras possessing a sufficient family $\M$ of invariant positive sesquilinear forms carry several topologies related to $\M$ which make every *-representation continuous. This leads to define the  class  of locally convex quasi GA*-algebras whose main feature consists in the fact that the family of their bounded elements, with respect to the family  $\M$, is a dense C*-algebra.
\end{abstract}

\subjclass[2020]{Primary 46K05, 46K10, 47L60, 47A07; Secondary 08A55}

\keywords{invariant positive sesquilinear form, *-representation, locally convex quasi *-algebra}

\maketitle

\section{Introduction} 

{Locally convex quasi *-algebras $(\A[\tau],\Ao)$ arise often when taking
	{the completion $\A:= \widetilde{\A_0}[\tau]$ of a locally convex *-algebra
		$\A_0[\tau]$}
	with separately (but not jointly) continuous multiplication (this was, in fact, the case considered at an early stage of the theory, concerning applications in quantum physics).		
	Concrete examples are provided by families of operators acting in rigged Hilbert spaces or by certain families of unbounded operators acting on a common domain $\D$ of a Hilbert space $\H$.
	For a synthesis of the theory and of its applications we refer to \cite{FT_book}.
	
	The study of this structure  and the analysis performed also in 	\cite{ ant_be_ct, bell_db_ct,  bell_ct, Frag2, ctbound1} made it clear that the most regular situation occurs when the locally convex quasi *-algebra $(\A[\tau],\Ao)$ under consideration possesses a {\em sufficiently rich} family $\IA$ of invariant positive sesquilinear forms on $\A\times \A$ (see below for definitions); they allow  a GNS construction similar to that defined by  a positive linear functional on a *-algebra $\Ao$.
	The basic idea where this paper moves from, is to consider a quasi *-algebra $(\A,\Ao)$ where one can introduce a locally convex topology by means of the set of sesquilinear forms $\IA$ itself. In the best circumstances we expect a behavior analogous to that of a *-algebra ${\mathfrak B}_0$ whose topology can be defined via families of C*-seminorms:		
	$$p_M(x)= \sup_{\omega\in M; \omega(\id)=1} \omega(x^*x)^{1/2}$$
	where $M$ is a convenient set of positive linear functionals on ${\mathfrak B}_0$  \cite{yood}. 
	
	For this reason, we start from a pure algebraic setup, i.e., $(\A,\Ao)$ is a quasi *-algebra and we suppose that  it has a sufficiently large $\IA$ (in the sense that, for some convenient subset $\M\subset\IA$ and for every $a\in \A, a\neq0$ there exists $\vp\in \M$ such that $ \vp(a,a)>0$). 
	Starting from this set $\M$, we undertake the construction of locally convex topologies on $\A$ selecting in particular those under which each (sufficiently regular) *-representation is continuous. This analysis leads to the selection of a class of locally convex quasi *-algebras $(\A[\tau], \Ao)$ (called locally convex quasi GA*-algebras) whose bounded elements constitute a C*-algebra. 	The paper is organized as follows.
	In Section \ref{Sect_Prelim} some preliminary notions on quasi *-algebras, their topologies and their representations are summarized.
	In Section \ref{Sect_Order} we introduce the order defined by a family $\M\subset \IA$ whose related wedge becomes a cone when the family $\M$ is sufficiently rich.
	In Section \ref{Sect_Bdd}, given $\M\subset \IA$, we introduce two notions of bounded elements: those bounded with respect to a family $\M$ and those related to the  order defined by $\M$. These two notions turn out to be equivalent  and every *-representation produces a bounded operator when acting on a bounded element.
	In Section \ref{Sect_Topologies} the topologies generated by a family $\M\subset\IA$ are investigated and in Section \ref{Sect_GA} we finally introduce locally convex quasi GA*-algebras and study some properties of them. Locally convex quasi GA*-algebras are characterized by the fact that their topology is equivalent to that generated by some $\M\subset \IA$ as in Section \ref{Sect_Topologies}.

\section{Basic definitions and facts} \label{Sect_Prelim}

We begin with some preliminaries; we refer to \cite{FT_book} for details.

\medskip  
A {\em quasi *-algebra} $(\A, \Ao)$ is a pair consisting of a vector space $\A$ and a *-algebra $\Ao$ contained in $\A$ as a subspace and such that
\begin{itemize}
	\item[(i)] $\A$ carries an involution $a\mapsto a^*$ extending the involution of $\Ao$;
	\item[(ii)] $\A$ is  a bimodule over $\A_0$ and the module multiplications extend the multiplication of $\Ao$. In particular, the following associative laws hold:
	\begin{equation}\notag \label{eq_associativity}
		(xa)y = x(ay); \ \ a(xy)= (ax)y, \quad \forall \ a \in \A, \  x,y \in \Ao;
	\end{equation}
	\item[(iii)] $(ax)^*=x^*a^*$, for every $a \in \A$ and $x \in \Ao$.
\end{itemize}

The
\emph{identity} of $(\A, \Ao)$, if any, is a necessarily unique element $\id\in \Ao$, such that
$a\id=a=\id a$, for all $a \in \A$.

We will always suppose that
\begin{align*}
	&ax=0, \; \forall x\in \Ao \Rightarrow a=0 \\
	&ax=0, \; \forall a\in \A \Rightarrow x=0. 
\end{align*}
These two conditions are clearly satisfied if $(\A, \Ao)$ has an identity $\id$.
\begin{definition}
A  quasi *-algebra $(\A, \Ao)$ is said to be  {\em locally convex} if $\A$ is a locally convex vector space, with a topology $\tau$ enjoying the following properties
\begin{itemize}
	\item[{\sf (lc1)}] $x\mapsto x^*$, \ $x\in\A\ze$,  is continuous;
	\item[{\sf (lc2)}] for every $a \in \A$, the maps  $x \mapsto ax$ and
	$x \mapsto xa$, from $\A\ze$ into $\A$, $x\in \A\ze$, are continuous;
	\item[{\sf (lc3)}] $\overline{\Ao}^\tau = \A$; i.e., $\Ao$ is dense in $\A[\tau]$.
\end{itemize}

In particular, if $\tau$ is a norm topology, with norm $\|\cdot\|$, and
\begin{itemize}
	\item[{\sf (bq*)}] $\|a^*\|=\|a\|, \; \forall a \in \A$
\end{itemize}
then, $(\A[\|\cdot\|], \Ao)$ is called a {\em normed  quasi *-algebra} and a {\em Banach  quasi *-algebra} if the normed vector space $\A[\|\cdot\|]$ is complete.
\end{definition}

\medskip
Let $\D$ be a dense vector subspace of a Hilbert space $\H$. Let us consider the following families of linear operators acting on $\D$:
\begin{align*}		{\LDH}&=\{X \mbox{ closable}, D(X)=\D;\; D(X\x)\supset \D\} \\	{\Lc^\dagger(\D)}&=\{X\in \LDH: X\D\subset \D; \; X\x\D\subset \D\} \\	{\Lc^\dagger(\D)_b} &=\{Y\in \Lc^\dagger(\D); \, \overline{Y} \mbox{ bounded} \},\end{align*}
where $\overline{Y}$ denotes the closure of $Y$.
The involution in $\LDH$ is defined by		$X^\dag := X\x\upharpoonright \D$, the restriction of $X\x$, the adjoint of $X$, to $\D$.

The set $\LD$ is a *-algebra; more precisely, it is the maximal O*-algebra on $\D$.
{(for the theories of O*-algebras and *-representations we refer to \cite{Shmud Unboun})}.

\smallskip 		{	Furthermore, $\LDH$ is also a {\em partial *-algebra} \cite{Ant1}  with respect to the following operations: the usual sum $X_1 + X_2 $,
	the scalar multiplication $\lambda X$, the involution $ X \mapsto X\ad := X\x \up {\D}$ and the \emph{(weak)}
	partial multiplication 
	\begin{equation} \label{eqn_wmult}
		X_1 \mult X_2 = {X_1}\ad\x X_2,
	\end{equation} defined whenever $X_2$ is a weak right multiplier of
	$X_1$ (we shall write $X_2 \in R^{\rm w}(X_1)$ or $X_1 \in L^{\rm w}(X_2)$), that is, whenever $ X_2 {\D} \subset
	{\D}({X_1}\ad\x)$ and  $ X_1\x {\D} \subset {\D}(X_2\x).$

	\medskip}

The following topologies on $\LDH$ will be used in this paper.

The \emph{weak topology} ${\sf t}_w$ on $\LDH$ is  defined by the seminorms
$$ r_{\xi, \eta}(X)=|\ip{X\xi}{\eta}|, \quad X \in \LDH,\, \xi, \eta \in \D.$$

The \emph{strong topology} ${\sf t}_s$ on $\LDH$ is  defined by the seminorms
$$ p_\xi(X)=\|X\xi\|, \quad X \in \LDH, \, \xi \in \D.$$

\noindent The \emph{strong* topology} ${\sf t}_{s^\ast}$ on  $\LDH$ is usually defined by the seminorms
$$p^*_\xi (X)= \max\{\|X\xi\|, \|X\ad\xi\|\}, \,  \xi \in \D.$$

Then,		$(  {\LDH}[{\sf t}_{s^\ast}],    {\Lc^\dagger(\D)_b})$
is a complete locally convex quasi *-algebra \cite[Section 6.1]{FT_book}.

\medskip
{Let us denote by $t_\dag$ 
	the graph topology  on $\D$ defined by the set of seminorms
	$$\xi \in \D \to \|X\xi\|; \; X\in \LDH.$$
	The  family of all bounded subsets of $\D[t_\dag]$  is denoted by $\B$. }

	{	  We will indicate by $\sft_u$ , $\sft^u$ and $\sft^u_*$, respectively, the {\em uniform} topologies defined by the following families of seminorms:}
	
	\noindent for  $\sft_u$ : $p_\mB(X) = \sup_{\xi, \eta \in \mB}|\ip{X\xi}{\eta}|, \quad \mB \in \B$;
	
	\noindent for  $\sft^u$ : $p^\mB(X) = \sup_{\xi\in \mB}\|X\xi\|, \quad \mB \in \B$;
	
	\noindent for  $\sft^u_*$ : $p^\mB_*(X) = \max\{p^\mB(X), p^\mB(X\ad)  \}, \quad \mB \in \B$.
	
	{It is easy to see that $\sft_u \preceq\sft^u \preceq\sft^u_*$:}
	$$ p_\mB(X) \leq \gamma_\mB \, p^\mB(X)\leq \gamma_\mB p^\mB_*(X)
	, \quad \forall X \in \LDH;$$
	moreover, $$p_\mB( X\ad \mult X)=p^\mB (X)^2 \mbox{  whenever } X\ad \mult X \mbox{ is well--defined}.$$
	
	As shown in \cite[Proposition 4.2.3]{Ant1}
	$\LDH[\sft^u_*]$ is complete.

\begin{definition} \label{defn_starrep}Let $(\A,\Ao)$ be a quasi *--algebra and $\D_\pi$ \index{$\D_\pi$} a dense domain
in a certain Hilbert
space $\H_\pi$. \index{$\H_\pi$}  A linear map $\pi$ from $\A$ into $\LDHr$ is called  a
*--\emph{representation} of \index{*--representation} $(\A, \Ao)$,
if the following properties are fulfilled:
\begin{itemize}
	\item[(i)]  $\pi(a^*)=\pi(a)^\dagger, \quad \forall \ a\in \A$;
	\item[(ii)] for $a\in \A$ and $x\in \Ao$, $\pi(a)\mult\pi(x)$ is well--defined and {$\pi(a)\mult \pi(x)=\pi(ax)$}.
\end{itemize}

If $(\A,\Ao)$ has a unit $\id
\in \Ao$, we assume that for every *-representation $\pi$ of $(\A,\A_0)$,  $\pi(\id)={\idop_{\D_\pi}}$, the
identity operator on  the space $\D_\pi$.
\end{definition}

If {$\pi_o:= \pi\upharpoonright{\Ao}$} is a *--representation of the *--algebra $\Ao$ into $\LpDr$ we say that
$\pi$ is a {\em qu*--representation} of $(\A,\A_0)$.\index{qu*--representation}

A *--representation $\pi$ is called {\em bounded}
if $\pi(a)$ is a bounded operator in $\D_\pi$, for every $a \in \A$.

\medskip

Let $(\A,\Ao)$ be a quasi *--algebra.  We
denote by $\QA$ \index{$\QA$} the set of all sesquilinear forms on $\A \times \A$, such that
\begin{itemize}
	\item[(i)]$\vp$ is positive, i.e.,  $\vp(a,a)\geq 0, \quad \forall \ a \in \A$;
	\item[(ii)]$\vp(ax,y)=\vp(x, a^*y), \quad \forall \ a \in \A, \ x,y \in \Ao$.
\end{itemize}

{For every $\vp \in \QA$, the set	
			$$ N_{\varphi} := \big\{ a \in \A :
			\varphi(a,a) = 0 \big\} = \big\{ a \in \A : \varphi(a,b) = 0, \ \forall \
			b \in \A \big\}.
			$$
is a subspace of $\A$.}

Let $\lambda_{\varphi} : \A \rightarrow \A/N_{\varphi}$
be the usual quotient map and for each $ a
\in \A$, let $\lambda_{\varphi}(a)$ \index{$\lambda_{\varphi}(a)$} be the corresponding coset of $
\A/N_{\varphi}$, which contains $a$. An inner product $\ip{\cdot}{\cdot}$ is then defined
on $\lambda_\varphi(\A) =\A/N_{\varphi}$
by
\begin{align*} \label{lavar}
\ip{\lambda_{\varphi}(a)} {\lambda_{\varphi}(b) } := \varphi(a,b), \quad \forall \; a,b \in \A.
\end{align*}
Denote by $\H _{\vp}$ the Hilbert space obtained by the
completion of the pre--Hilbert space $\lambda_\varphi(\A).$

{ \begin{definition} We denote by $\IA$ the subset of forms $\vp \in \QA$ for which $\lambda_\vp(\Ao)$ is dense in $\H_\vp$. {Elements of $\IA$ are also called {\em invariant positive sesquilinear forms} or briefly {\em ips-forms}}.

Moreover, if $(\A[\tau], \Ao)$ is a locally convex quasi *-algebra, we denote by $\PA{\tau}$   the family of elements $\vp$ of $\QA$ that are jointly $\tau$-continuous; i.e., there exists a continuous seminorm $p_\sigma$ such that $$|\vp(a,b)|\leq p_\sigma(a)p_\sigma(b), \quad \forall a,b \in \A .$$		
\end{definition}}

The sesquilinear forms of $\IA$ allow  building up  a GNS-represen\-ta\-tion \cite{FT_book}. Indeed,
\begin{proposition} \label{idix} Let $(\A,\Ao)$ be a quasi *--algebra with unit
$\id$ and $\vp$ a sesquilinear form on $\A \times \A$. The following
statements are equivalent:
\vspace{-1mm}
\begin{itemize}
\item[{\em (i)}] $\vp \in \IA$.
\item[{\em (ii)}] There exist a Hilbert space $\H _\vp$,\index{$\H_\vp$} a dense domain $\D_\vp$ \index{$\D_\vp$} of the Hilbert space $\H _\vp$
and a closed cyclic *--representation
$\pi_\vp$ in ${\mathcal L}\ad(\D_\vp,\H _\vp)$, with cyclic
vector $\xi_\vp$ {\em (in the sense that $\pi_\vp(\Ao)\xi_\vp$ is dense in $\H _\vp$)},
such that \begin{equation*} \vp(a,b)
	=\ip{\pi_\vp(a)\xi_\vp}{\pi_\vp(b)\xi_\vp}, \quad \forall \ a, b \in \A.\end{equation*}
\end{itemize}
\end{proposition}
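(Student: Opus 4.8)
The plan is to prove the equivalence by the Gelfand--Naimark--Segal machinery adapted to ips-forms, treating $(i)\Rightarrow(ii)$ as an explicit construction and $(ii)\Rightarrow(i)$ as a recognition statement. For $(i)\Rightarrow(ii)$ I would start from the pre-Hilbert space $\lambda_\vp(\A)$ and its completion $\H_\vp$ already introduced before the statement, set $\xi_\vp:=\lambda_\vp(\id)$, and take as domain $\D_\vp:=\lambda_\vp(\Ao)$. The candidate representation is defined on generators by
\[
\pi_\vp(a)\lambda_\vp(x):=\lambda_\vp(ax),\qquad a\in\A,\ x\in\Ao,
\]
so that $\pi_\vp(a)\xi_\vp=\lambda_\vp(a\id)=\lambda_\vp(a)$ and the reproducing identity $\vp(a,b)=\ip{\pi_\vp(a)\xi_\vp}{\pi_\vp(b)\xi_\vp}=\ip{\lambda_\vp(a)}{\lambda_\vp(b)}$ is then immediate. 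Note that for general $a\in\A$ the vector $\lambda_\vp(ax)$ need not lie in $\D_\vp$, only in $\H_\vp$, which is why the natural target is ${\mc L}\ad(\D_\vp,\H_\vp)$ rather than an O*-algebra.

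The first and, I expect, most delicate point is well-definedness of $\pi_\vp(a)$: if $x\in\Ao$ lies in $N_\vp$ I must show $ax\in N_\vp$. Using only property (ii) of $\QA$ I obtain, for every $z\in\Ao$, the identity $\vp(ax,z)=\vp(x,a^*z)=0$ (the last equality because $x\in N_\vp$), so $\lambda_\vp(ax)$ is orthogonal to all of $\lambda_\vp(\Ao)$. The conclusion $\lambda_\vp(ax)=0$ then follows precisely because $\vp\in\IA$, i.e. $\lambda_\vp(\Ao)$ is dense in $\H_\vp$. This is the step where the defining hypothesis of $\IA$ is genuinely used, and it is exactly where the argument would break down for a form merely in $\QA$.

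Next I would establish the structural properties. Linearity of $a\mapsto\pi_\vp(a)$ is clear from the definition. The computation $\ip{\pi_\vp(a)\lambda_\vp(x)}{\lambda_\vp(y)}=\vp(ax,y)=\vp(x,a^*y)=\ip{\lambda_\vp(x)}{\pi_\vp(a^*)\lambda_\vp(y)}$ shows simultaneously that $\pi_\vp(a^*)\subseteq\pi_\vp(a)^*$, hence $\pi_\vp(a)$ is closable with $\D_\vp\subseteq D(\pi_\vp(a)^*)$, and that $\pi_\vp(a)\ad=\pi_\vp(a^*)$, giving property (i) of Definition \ref{defn_starrep} and $\pi_\vp(a)\in{\mc L}\ad(\D_\vp,\H_\vp)$. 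For property (ii) I would check that $\pi_\vp(x)$ is a weak right multiplier of $\pi_\vp(a)$ and that $\pi_\vp(a)\mult\pi_\vp(x)=\pi_\vp(ax)$; on the dense domain this reduces to the module associativity $a(xz)=(ax)z$ inside $\lambda_\vp$, namely $\pi_\vp(a)\pi_\vp(x)\lambda_\vp(z)=\lambda_\vp(a(xz))=\lambda_\vp((ax)z)=\pi_\vp(ax)\lambda_\vp(z)$, once the domain inclusions making $\mult$ defined are verified. The unit is handled by $\pi_\vp(\id)\lambda_\vp(x)=\lambda_\vp(x)$, so $\pi_\vp(\id)=\idop_{\D_\vp}$, and cyclicity holds since $\pi_\vp(\Ao)\xi_\vp=\lambda_\vp(\Ao)$ is dense by hypothesis. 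Finally, to obtain a \emph{closed} representation I would replace $\D_\vp$ by its completion in the graph topology $t_\dag$ determined by the family $\{\pi_\vp(a)\}$; this enlarges the domain without altering the closures of the operators, preserves $\xi_\vp$, cyclicity and the reproducing formula, and yields the required closed cyclic *-representation.

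For the converse $(ii)\Rightarrow(i)$, given such a representation I would define $\vp(a,b):=\ip{\pi_\vp(a)\xi_\vp}{\pi_\vp(b)\xi_\vp}$ and verify membership in $\QA$: positivity is $\vp(a,a)=\|\pi_\vp(a)\xi_\vp\|^2\geq0$, and the invariance $\vp(ax,y)=\vp(x,a^*y)$ follows from $\pi_\vp(ax)=\pi_\vp(a)\mult\pi_\vp(x)$ by transferring $\pi_\vp(a)$ across the inner product through its adjoint and using $\pi_\vp(a)\ad=\pi_\vp(a^*)$ on the invariant domain $\D_\vp$ (which contains $\pi_\vp(y)\xi_\vp$). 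To see $\vp\in\IA$ I would introduce $U:\lambda_\vp(a)\mapsto\pi_\vp(a)\xi_\vp$, which is well defined and isometric by construction and so extends to an isometry $\H_\vp\to\H_\pi$; since its range contains the dense set $\pi_\vp(\Ao)\xi_\vp$ it is onto, hence $U$ is unitary and carries $\lambda_\vp(\Ao)$ onto a dense subspace, forcing $\lambda_\vp(\Ao)$ to be dense in $\H_\vp$. I anticipate no serious obstacle in this direction beyond bookkeeping; the crux of the whole proposition remains the well-definedness step above, which is exactly what the density condition defining $\IA$ is designed to supply.
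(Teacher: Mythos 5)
Your proposal is correct and follows exactly the route the paper intends: the paper states this proposition without proof (citing \cite{FT_book}), but the remark immediately following it describes precisely your construction, namely $\pi^\circ_\vp(a)\lambda_\vp(x)=\lambda_\vp(ax)$ on $\D_\vp=\lambda_\vp(\Ao)$ followed by taking the closure, with the density of $\lambda_\vp(\Ao)$ in $\H_\vp$ doing the work in the well-definedness step just as you identify. The converse via the unitary $U:\lambda_\vp(a)\mapsto\pi_\vp(a)\xi_\vp$ is likewise the standard argument, so no discrepancy with the paper's approach.
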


\begin{remark} {  The *-representation $\pi_\vp$ is in fact obtained by taking the closure of the *-representation $\pi^\circ_\vp$ defined on $\lambda_\vp(\Ao)$ by $$\pi^\circ_\vp(a)\lambda_\vp(x)= \lambda_\vp(ax)
\quad a\in \A, x\in \Ao. $$ }
\end{remark}

{	\medskip 	If $\M\subset \IA$ is {\em rich enough} (in the sense that, for every $a\in \A$, there exists $\vp\in \M$, such that $\vp(a,a)>0$) then we can introduce a partial multiplication as in \cite[Definition 3.1.30]{FT_book}.

	Indeed, in this case, 
	we say that the {\it weak} multiplication, $a\wmult b, \ a, b \in \A$, is well--defined if there exists $c\in\A$, such that
	\begin{equation}\label{def-3.1.30}
		\varphi(bx,a^*y)=\varphi(cx,y), \quad \forall \ x,y\in\Ao \ \mbox{ and } \ \varphi\in\M.
	\end{equation}
	In this case, we put $a\wmult b:=c$. }

With these definitions, we conclude that (\cite[Proposition 4.4]{att_2010})
$\A$  is also a  partial *--algebra with respect to the weak multiplication $\wmult$.

{	\begin{remark} The uniqueness of $c= a\wmult b$ is guaranteed by Proposition \ref{prop_2.4} below. Clearly this multiplication depends on the family $\M$. 
	\end{remark}}

\section{Families of forms and order structure} \label{Sect_Order}
As discussed extensively in \cite{FT_book}, the notion of bounded element of a locally convex quasi *-algebra reveals to be important for undertaking a spectral analysis in this structure. We propose here two different approaches similar to those developed in \cite{att_2010} but without the continuity assumptions made therein.

\smallskip
Before going forth, we introduce some notions needed in what follows. In particular, 
in analogy to \cite{yood}, 
\begin{definition} A subset $\M \subset \IA$ is said to be

\begin{description}
	\item[ {\em balanced} ] if  $\vp\in\M$ implies $\vp^x \in \M$, for every $x \in \Ao$, where $\vp^x(a,b):=\vp(ax,bx)$ for all $a,b\in \A$.
	\item[{\em sufficient}] if it is balanced and if, for every $a\in \A\setminus\{0\}$, there exists $\vp\in \M$ such that $\vp(a,a)>0$.
\end{description}
\end{definition}

{ \begin{remark} \label{lemma: vpx}If $(\A[\tau], \Ao)$ is a locally convex quasi *-algebra, then
	\begin{itemize}\item[(a)] $\PA{\tau} \subset \IA$;
		\item[(b)] $\PA{\tau}$ is balanced.
	\end{itemize}
	\end{remark}	}
\medskip

The following proposition allows us to deal with notion of sufficiency in other equivalent ways.
\begin{proposition}\label{prop_2.4} Let $(\A,\A_0)$ be a quasi *-algebra, $\M$ a subset of $\IA$  and  $a\in\A$. Then the following are equivalent:
	\begin{itemize}
		\item[i)] $\vp(ax,x)=0$, for all $\vp\in\M$, $x\in\A_0$;
		\item[ii)] $\vp(ax,y)=0$, for all $\vp\in\M$, $x,y\in\A_0$;
		\item[iii)]$\vp(ax,ax)=0$, for all $\vp\in\M$, $x\in\A_0$.\end{itemize} 
	If $(\A,\Ao)$ has unit $\id$ and $\M$ is balanced, then the previous statements are equivalent to
	\begin{itemize}
		\item[iv)]$\vp(a,a)=0$, for every $\vp\in\M$.
	\end{itemize}
	\end{proposition}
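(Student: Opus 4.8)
The plan is to prove the cyclic chain $(i)\Rightarrow(ii)\Rightarrow(iii)\Rightarrow(i)$ among the first three conditions and then to handle $(iv)$ separately. Throughout I fix $a\in\A$ and $\vp\in\M$, and I use that each such $\vp$ lies in $\IA\subset\QA$, so it is a positive sesquilinear form with $\lambda_\vp(\Ao)$ dense in $\H_\vp$.

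For $(i)\Rightarrow(ii)$ I would observe that, for fixed $a$, the map $B(x,y):=\vp(ax,y)$ is sesquilinear on $\Ao\times\Ao$: it is linear in $x$ because left multiplication by $a$ and the first slot of $\vp$ are linear, and conjugate linear in $y$ because $\vp$ is conjugate linear in its second slot. Condition $(i)$ is precisely $B(x,x)=0$ for all $x\in\Ao$, and since $\Ao$ is a complex vector space the polarization identity expresses $B(x,y)$ as a linear combination of the diagonal values $B(x+i^k y,\,x+i^k y)$, $k=0,1,2,3$, each argument lying in $\Ao$. All these vanish, so $B(x,y)=0$, which is $(ii)$.

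The step $(ii)\Rightarrow(iii)$ is where membership in $\IA$ is indispensable. I would read $(ii)$ as $\ip{\lambda_\vp(ax)}{\lambda_\vp(y)}=0$ for every $y\in\Ao$, i.e. $\lambda_\vp(ax)\perp\lambda_\vp(\Ao)$; density of $\lambda_\vp(\Ao)$ in $\H_\vp$ then forces $\lambda_\vp(ax)=0$, whence $\vp(ax,ax)=\|\lambda_\vp(ax)\|^2=0$, giving $(iii)$. To close the cycle, $(iii)\Rightarrow(i)$ uses only positivity: Cauchy--Schwarz for $\vp\in\QA$ gives $|\vp(ax,x)|\le\vp(ax,ax)^{1/2}\vp(x,x)^{1/2}=0$.

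For the last equivalence under the extra hypotheses I would argue $(iii)\Leftrightarrow(iv)$ directly. Taking $x=\id$ in $(iii)$ and using the unit gives $\vp(a,a)=\vp(a\id,a\id)=0$ for every $\vp\in\M$, which is $(iv)$. Conversely, given $(iv)$, fix $\vp\in\M$ and $x\in\Ao$; balancedness provides $\vp^x\in\M$, and since $\vp^x(a,a)=\vp(ax,ax)$, applying $(iv)$ to $\vp^x$ yields $\vp(ax,ax)=0$, i.e. $(iii)$. The only genuinely delicate point is $(ii)\Rightarrow(iii)$, where the density encoded in the definition of $\IA$ is exactly what upgrades orthogonality to the dense subspace $\lambda_\vp(\Ao)$ into the vanishing of the vector $\lambda_\vp(ax)$; everything else is formal bookkeeping with sesquilinearity, polarization, positivity, the unit, and balancedness.
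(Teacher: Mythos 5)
Your proof is correct and complete; the paper itself states Proposition \ref{prop_2.4} without proof, and your argument (polarization for $(i)\Rightarrow(ii)$, density of $\lambda_\vp(\Ao)$ in $\H_\vp$ for $(ii)\Rightarrow(iii)$, Cauchy--Schwarz for $(iii)\Rightarrow(i)$, and the unit plus balancedness for $(iii)\Leftrightarrow(iv)$) is exactly the standard route the authors evidently have in mind. You also correctly identify the one place where $\M\subset\IA$ (rather than merely $\QA$) is essential, namely the density step.
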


In the case of a locally convex quasi *-algebra $(\A[\tau], \Ao)$,  positive elements have been defined in  \cite{Frag2} as the members of the closure $\A^+:=\overline{\Ao^+}^{\,\tau}$,  where
\begin{equation*} {\Ao^+}:=\left\{\sum_{k=1}^n x_k^* x_k, \, x_k \in \Ao,\, n \in {\mb N}\right\}.\end{equation*}
Here, as we have anticipated, we will start from a quasi *-algebra without a topology and  we will introduce the notion of positive element via a family $\M$ of forms of $\IA$.

\begin{definition} Let $(\A,\A_0)$ be a quasi *-algebra. We call	
$\M$-{positive} an element $a\in \A$ such that
$$ \vp(ax,x)\geq 0, \quad \forall \vp \in \M, \forall x \in \Ao.$$ 
We put
\begin{align*}\KI &:=\{a\in \A:\,\vp(ax,x)\geq 0, \,\,    \forall\vp\in\M, \forall x\in\A_0\}.\end{align*}
If $\M=\IA$ we denote the corresponding set by $\KII$.
\end{definition}

{ \begin{lemma} Let $(\A,\Ao)$ be a quasi *-algebra with a sufficient $\M\subset\IA$.  If $a$ is $\M$-positive, then $a=a^*$.
	\end{lemma}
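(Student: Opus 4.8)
The plan is to show that the skew part $b:=a-a^*$ is forced to vanish, using nothing but the two defining properties of the forms in $\M$ together with sufficiency. The starting observation is that, being positive (property (i) in the definition of $\QA$), each $\vp\in\M$ is automatically Hermitian over $\mathbb{C}$, i.e. $\vp(u,v)=\overline{\vp(v,u)}$ for all $u,v\in\A$, by the usual polarization argument. Combining this with the invariance property (ii), one gets, for every $\vp\in\M$ and every $x\in\Ao$,
\[
\vp(a^*x,x)=\vp(x,ax)=\overline{\vp(ax,x)},
\]
where the first equality is (ii) applied to the element $a^*\in\A$ (recall $(a^*)^*=a$, $y=x$) and the second is Hermiticity.

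Next I would feed in the hypothesis of $\M$-positivity. Since $\vp(ax,x)\ge 0$, this number is real, so $\overline{\vp(ax,x)}=\vp(ax,x)$, and the identity above becomes $\vp(a^*x,x)=\vp(ax,x)$. Subtracting, I obtain
\[
\vp(bx,x)=\vp(ax,x)-\vp(a^*x,x)=0,\qquad\forall\,\vp\in\M,\ \forall\,x\in\Ao,
\]
so that $b$ satisfies condition (i) of Proposition \ref{prop_2.4}. By the equivalence (i)$\Leftrightarrow$(iii) of that proposition, it follows that $\vp(bx,bx)=0$ for every $\vp\in\M$ and every $x\in\Ao$.

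At this point sufficiency of $\M$ closes the argument. Fixing $x\in\Ao$ and applying sufficiency to the element $bx\in\A$, I note that if $bx\neq 0$ there would exist $\vp\in\M$ with $\vp(bx,bx)>0$, contradicting what has just been established. Hence $bx=0$ for all $x\in\Ao$, and by the standing nondegeneracy assumption ($cx=0$ for all $x\in\Ao$ implies $c=0$) I conclude $b=0$, that is, $a=a^*$.

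The step that requires the most care is the symmetry computation of the first paragraph: one must correctly marry the \emph{automatic} Hermiticity of positive forms with the invariance relation (ii) in order to upgrade the one-sided positivity of $\vp(ax,x)$ into the genuine two-sided identity $\vp(a^*x,x)=\vp(ax,x)$. The rest is a clean invocation of Proposition \ref{prop_2.4} followed by sufficiency; I would stress that no unit in $\A$ is needed, precisely because sufficiency is applied to the elements $bx$ rather than to $b$ itself.
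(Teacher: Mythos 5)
Your proof is correct and follows the same route the paper intends: its one-line proof (``a consequence of Proposition \ref{prop_2.4}'') is exactly the argument you spell out, namely that Hermiticity of positive forms plus invariance and $\M$-positivity force $\vp((a-a^*)x,x)=0$, after which Proposition \ref{prop_2.4} and sufficiency applied to $(a-a^*)x$, together with the standing nondegeneracy assumption, give $a=a^*$. Your observation that no unit is needed is a worthwhile point of detail, since the lemma indeed does not assume one.
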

	
	\begin{proof} The conclusion is a consequence of Proposition \ref{prop_2.4}.
\end{proof}}

}

The set $\KI$ is  a $qm$-admissible wedge, that is  $a+b\in\KI$, $\lambda a\in \KI$ and  $x^*ax\in \KI$ for all  $a,b\in \KI$, $x\in\Ao$ and  $\lambda\geq0$. If, moreover, $\A$ has a unit $\id$ then $\id\in \KI$.

As usual, one can define an {order} on the   {\em real} vector space $\A_h=\big\{a \in \A:\, a=a^*\big\}$ by   $$
{a \leqm b \ \ \Leftrightarrow \ \  b-a \in \KI,\quad a,b\in\A_h.}
$$

\begin{proposition}\label{prop: 4.4.3}
Let $(\A,\A_0)$ be  quasi *-algebra with unit $\id$ and let $\M$ be a balanced subset of $\IA$. Then the following are equivalent:
\begin{itemize}
	\item[i)] $\M$ is sufficient;
	\item[ii)] $\KI\cap (-\KI)=\{0\}$.
\end{itemize}			
\end{proposition}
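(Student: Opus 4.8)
The plan is to identify the set $\KI\cap(-\KI)$ explicitly and then read off both implications. First I would observe that $a\in\KI\cap(-\KI)$ means that both $a$ and $-a$ are $\M$-positive, i.e. $\vp(ax,x)\geq 0$ and $\vp((-a)x,x)=-\vp(ax,x)\geq 0$ for every $\vp\in\M$ and $x\in\Ao$. Since both quantities are real and of opposite sign, combining them forces
$$\vp(ax,x)=0,\qquad \forall\,\vp\in\M,\ \forall\,x\in\Ao,$$
which is exactly condition (i) of Proposition \ref{prop_2.4}. As we are assuming that $\A$ has a unit and that $\M$ is balanced, Proposition \ref{prop_2.4} allows me to pass to the equivalent condition (iv), namely $\vp(a,a)=0$ for every $\vp\in\M$. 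Hence
$$\KI\cap(-\KI)=\{a\in\A:\ \vp(a,a)=0\ \text{for all}\ \vp\in\M\}.$$

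With this identification in hand, the two implications become immediate. For (i)$\Rightarrow$(ii): if $\M$ is sufficient and $a\in\KI\cap(-\KI)$, then $\vp(a,a)=0$ for all $\vp\in\M$, so $a$ cannot be nonzero, since otherwise sufficiency would supply some $\vp\in\M$ with $\vp(a,a)>0$; thus $a=0$ and $\KI\cap(-\KI)=\{0\}$. For (ii)$\Rightarrow$(i): $\M$ is balanced by hypothesis, so it remains only to check the separation property. If some $a\neq 0$ satisfied $\vp(a,a)=0$ for all $\vp\in\M$, then $a$ would lie in $\KI\cap(-\KI)$ by the displayed equality, contradicting (ii); hence for every $a\neq 0$ there is $\vp\in\M$ with $\vp(a,a)>0$, which is precisely sufficiency.

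The only non-routine ingredient is the reduction from the ``smeared'' vanishing condition $\vp(ax,x)=0$ to the pointwise condition $\vp(a,a)=0$, and this is exactly where the unit and the balancedness of $\M$ enter, via the equivalence (i)$\Leftrightarrow$(iv) of Proposition \ref{prop_2.4}. I would be careful to apply that equivalence in both directions (one for each implication) and to confirm that the two opposing inequalities genuinely collapse to an equality. Everything else is a direct unwinding of the definitions of $\KI$, of the order $\leqm$, and of sufficiency, so no further machinery should be needed.
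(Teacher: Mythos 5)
Your proof is correct and follows essentially the same route as the paper: both directions hinge on Proposition \ref{prop_2.4} (the equivalence of $\vp(ax,x)=0$ for all $x\in\Ao$ with $\vp(a,a)=0$, using the unit and balancedness) combined with the separation property of sufficiency. Your explicit identification of $\KI\cap(-\KI)$ as $\{a\in\A:\vp(a,a)=0\ \forall\vp\in\M\}$ is just a tidier packaging of the same argument.
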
\begin{proof} $i)\Rightarrow ii)$ Let $a\in\KI\cap (-\KI)$. Then $\vp(ax,x)=0$, for every $\vp\in \M$ and every $x\in\A_0$; hence, by Proposition \ref{prop_2.4}, 
and by the sufficiency of $\M$ we get $a=0$.\\ $ii)\Rightarrow i)$ Let us suppose by absurd that there exists $a\in \A$, $a\neq0$  such that $\vp(a,a)=0$, for every $\vp\in \M$. 
Then again by Proposition \ref{prop_2.4}, it follows that $\vp(ax,x)=0$ for every $x\in\A_0$ and every $\vp \in \M$; this means that $a\in \KI\cap (-\KI)=\{0\}$ a contradiction.  
\end{proof}

\section{Bounded and order bounded elements}\label{Sect_Bdd}
\begin{definition} \label{defn_mbounded}
Let $(\A,\Ao)$ be a quasi *-algebra with $\M\subset \IA$ sufficient. We  say that an element $a\in \A$ is $\M$-bounded if there exists $\gamma_a=\gamma_{a,\M}>0$ such that
$$ |\vp(ax,y)| \leq \gamma_a \vp(x,x)^{1/2}\vp(y,y)^{1/2}, \quad \forall \vp\in \M;\, \forall x,y \in \Ao.$$
If $a$ is $\M$-bounded, we put
\begin{align*}\|a\|\rnb=\inf\{\gamma_a>0: |\varphi(ax,y)| &\leq \gamma_a \vp(x,x)^{1/2}\vp(y,y)^{1/2},\\ &\quad\,\varphi\in\mathcal M, \,\,  x,y\in\Ao\}. \end{align*}
\end{definition}

{\begin{remark} \label{equalities}For future use, we notice, that, in general and regardless to the $\M$-boundedness of $a\in\A$, 
the following equalities hold:
\begin{align*}
	\Lambda_a &:=
	\inf\{\gamma_a>0: |\varphi(ax,y)| \leq \gamma_a \vp(x,x)^{1/2}\vp(y,y)^{1/2}, \,\varphi\in\mathcal M, \,  x,y\in\Ao\}\\ &= \sup\{ |\vp(ax,y)|; \vp\in \M, x,y\in \Ao, \vp(x,x)=\vp(y,y)=1 \}
	\\ &= \sup\{\|\pi_\vp(a)\|;\; \vp\in \M\}. \end{align*}

Moreover, if $a=a^*$,
$$\Lambda_a =\sup\{ |\vp(az,z)|; \vp\in \M, z\in \Ao, \vp(z,z)=1 \}.$$

\smallskip

The value of $\Lambda_a$ is finite if and only if $a$ is $\M$-bounded; by the definition itself, $\|a\|\rnb=\Lambda_a$. 
\end{remark}}
{
\begin{lemma}\label{prop_ equal norms} Let $a,b\in \A$ be {$\M$-bounded}.   Then 
	\begin{itemize}
		\item[(i)] $a^*$ is $\M$-bounded too, and $\|a^*\|\rnb=\|a\|\rnb$;
		\item[(ii)]$a+b$ is $\M$-bounded and $\|a+b\|\rnb\leq \|a\|\rnb + \|b\|\rnb$;
		\item[(iii)] $\alpha a$ is  $\M$-bounded, $\forall \alpha\in\mathbb{C}$;
		\item[(iv)] if $a\wmult b$ is well--defined, the product $a\wmult b$ is $\M$-bounded and
		$ \|a\wmult b\|\rnb\leq\|a\|\rnb\, \|b\|\rnb.$
	\end{itemize}	
\end{lemma}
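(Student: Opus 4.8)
The plan is to reduce every assertion to a statement about operator norms of the GNS representations, exploiting the identity
$$\|a\|\rnb=\Lambda_a=\sup\{\|\pi_\vp(a)\|:\vp\in\M\}$$
established in Remark \ref{equalities}. In particular, an element is $\M$-bounded precisely when this supremum is finite, and for each $\vp\in\M$ the map $\pi_\vp$ furnished by Proposition \ref{idix} is a *-representation, hence linear and involution preserving, with $\pi_\vp(u)\lambda_\vp(x)=\lambda_\vp(ux)$ on the dense domain for $u\in\A$, $x\in\Ao$, and $\vp(u,v)=\ip{\pi_\vp(u)\xi_\vp}{\pi_\vp(v)\xi_\vp}$. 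Throughout I shall use that when $a$ is $\M$-bounded, $\pi_\vp(a)$ has bounded closure of norm $\|\pi_\vp(a)\|\le\|a\|\rnb$, and that the restriction to $\D_\vp$ of the Hilbert-space adjoint satisfies $\|\pi_\vp(a)\ad\|=\|\pi_\vp(a)\|$.

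With this dictionary, (i)--(iii) are immediate. For (i), $\pi_\vp(a^*)=\pi_\vp(a)\ad$ gives $\|\pi_\vp(a^*)\|=\|\pi_\vp(a)\|$ for each $\vp$, and taking suprema yields simultaneously the $\M$-boundedness of $a^*$ and $\|a^*\|\rnb=\|a\|\rnb$. For (ii), linearity gives $\pi_\vp(a+b)=\pi_\vp(a)+\pi_\vp(b)$, whence $\|\pi_\vp(a+b)\|\le\|\pi_\vp(a)\|+\|\pi_\vp(b)\|$, and passing to the supremum over $\vp\in\M$ delivers finiteness together with the triangle inequality $\|a+b\|\rnb\le\|a\|\rnb+\|b\|\rnb$. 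For (iii), $\pi_\vp(\alpha a)=\alpha\pi_\vp(a)$ gives at once $\|\alpha a\|\rnb=|\alpha|\,\|a\|\rnb<\infty$.

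The only genuinely delicate point is (iv). A naive application of the definition fails: writing the defining relation \eqref{def-3.1.30} of $c:=a\wmult b$ as $\vp(cx,y)=\vp(bx,a^*y)$ for all $\vp\in\M$, $x,y\in\Ao$, one would like to bound the right-hand side using the $\M$-boundedness of $b$, but the second entry $a^*y$ lies in $\A$, not in $\Ao$, so the defining inequality for $b$ does not apply directly. I would circumvent this by passing to the GNS space: using $\lambda_\vp(bx)=\pi_\vp(b)\lambda_\vp(x)$ and $\lambda_\vp(a^*y)=\pi_\vp(a^*)\lambda_\vp(y)=\pi_\vp(a)\ad\lambda_\vp(y)$, one rewrites
$$\vp(cx,y)=\vp(bx,a^*y)=\ip{\pi_\vp(b)\lambda_\vp(x)}{\pi_\vp(a)\ad\lambda_\vp(y)}.$$
Now Cauchy--Schwarz in $\H_\vp$, together with $\|\pi_\vp(b)\lambda_\vp(x)\|\le\|\pi_\vp(b)\|\,\vp(x,x)^{1/2}\le\|b\|\rnb\,\vp(x,x)^{1/2}$ and the analogous estimate $\|\pi_\vp(a)\ad\lambda_\vp(y)\|\le\|\pi_\vp(a)\ad\|\,\vp(y,y)^{1/2}=\|\pi_\vp(a)\|\,\vp(y,y)^{1/2}\le\|a\|\rnb\,\vp(y,y)^{1/2}$, yields
$$|\vp(cx,y)|\le\|a\|\rnb\,\|b\|\rnb\,\vp(x,x)^{1/2}\vp(y,y)^{1/2}$$
for all $\vp\in\M$ and $x,y\in\Ao$, exhibiting $c=a\wmult b$ as $\M$-bounded with $\|a\wmult b\|\rnb\le\|a\|\rnb\,\|b\|\rnb$.

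I expect the main obstacle to be exactly this domain mismatch in (iv): one must resist applying the boundedness inequality for $b$ to the pair $(x,a^*y)$ and instead route the estimate through the closed operators $\pi_\vp(b)$ and $\pi_\vp(a)\ad$, which in turn requires the adjoint-norm identity $\|\pi_\vp(a)\ad\|=\|\pi_\vp(a)\|$ (valid because $\pi_\vp(a)$ has bounded closure) and the density of $\lambda_\vp(\Ao)$ in $\H_\vp$ guaranteed by $\vp\in\IA$. Equivalently, one may first verify that $\pi_\vp(a\wmult b)=\pi_\vp(a)\mult\pi_\vp(b)$ on the dense domain, read off $\|\pi_\vp(a\wmult b)\|\le\|\pi_\vp(a)\|\,\|\pi_\vp(b)\|$, and then take the supremum over $\M$.
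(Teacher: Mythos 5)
Your proof is correct. The paper actually states Lemma \ref{prop_ equal norms} without giving a proof, but your argument runs exactly along the lines of the machinery the paper sets up: Remark \ref{equalities} supplies the identity $\|a\|\rnb=\sup_{\vp\in\M}\|\pi_\vp(a)\|$, which disposes of (i)--(iii), and the rewriting of $\vp(bx,a^*y)$ as $\ip{\pi_\vp(b)\lambda_\vp(x)}{\pi_\vp(a^*)\lambda_\vp(y)}$ followed by Cauchy--Schwarz is the same chain of identities that the authors use inside the proof of Theorem \ref{thm: normed *-algebra}. You also correctly isolate the one genuine subtlety in (iv), namely that the defining inequality for $b$ cannot be applied to the pair $(x,a^*y)$ since $a^*y\notin\Ao$, and your detour through the bounded closures of $\pi_\vp(a)$, $\pi_\vp(b)$ resolves it properly.
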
 }

As in \cite[Proposition 4.18]{att_2010} one can prove
\begin{proposition} \label{prop_23} Let $a,b$ be $\M$-bounded elements of $\A$ and let $\vp\in \M$. Then, if $a\wmult b$ is well--defined, $\pi_\vp(a) \wmultt\pi_\vp(b)$ is also well--defined and $\pi_\vp(a\wmult b)=\pi_\vp(a) \wmultt\pi_\vp(b)$.
\end{proposition}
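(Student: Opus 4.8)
The plan is to exploit the fact that $\M$-boundedness of $a$ and $b$ forces the GNS operators $\pi_\vp(a),\pi_\vp(b)$ (and, by Lemma~\ref{prop_ equal norms}(iv), $\pi_\vp(c)$ with $c:=a\wmult b$) to be bounded. This trivializes the domain conditions hidden in the operatorial weak multiplication $\mult$ of \eqref{eqn_wmult}, after which the defining identity \eqref{def-3.1.30} of $a\wmult b$ can be transported into the inner product of $\H_\vp$.

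\emph{Boundedness and well-definedness.} By Remark~\ref{equalities}, $\|\pi_\vp(a)\|\le\|a\|\rnb$ and $\|\pi_\vp(b)\|\le\|b\|\rnb$ for every $\vp\in\M$, and $c=a\wmult b$ is $\M$-bounded, so $\pi_\vp(c)$ is bounded as well. Hence the closures of these operators are everywhere defined and bounded on $\H_\vp$; in particular the Hilbert adjoints $\pi_\vp(a)\ad\x=(\pi_\vp(a^*))\x$ and $\pi_\vp(b)\x$ have full domain $\H_\vp$. The two inclusions required for $\pi_\vp(b)\in R^{\rm w}(\pi_\vp(a))$, namely $\pi_\vp(b)\D_\vp\subset\D(\pi_\vp(a)\ad\x)$ and $\pi_\vp(a)\x\D_\vp\subset\D(\pi_\vp(b)\x)$, are then automatic, so $\pi_\vp(a)\mult\pi_\vp(b)$ is well defined and equals $(\pi_\vp(a^*))\x\pi_\vp(b)$ on $\D_\vp$.

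\emph{The identity on the cyclic subspace.} Using $\xi_\vp=\lambda_\vp(\id)$, the relation $\lambda_\vp(dx)=\pi_\vp(d)\lambda_\vp(x)$ for $d\in\A$, $x\in\Ao$, and $\vp(u,v)=\ip{\lambda_\vp(u)}{\lambda_\vp(v)}$, the defining relation $\vp(bx,a^*y)=\vp(cx,y)$ of $c=a\wmult b$ (in particular for our fixed $\vp\in\M$) reads $\ip{\pi_\vp(b)\lambda_\vp(x)}{\pi_\vp(a^*)\lambda_\vp(y)}=\ip{\pi_\vp(c)\lambda_\vp(x)}{\lambda_\vp(y)}$ for all $x,y\in\Ao$. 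Since $\pi_\vp(a^*)=\pi_\vp(a)\ad$ is bounded, its Hilbert adjoint may be moved across the inner product, turning the left-hand side into $\ip{(\pi_\vp(a^*))\x\pi_\vp(b)\lambda_\vp(x)}{\lambda_\vp(y)}$. Thus $\ip{(\pi_\vp(a)\mult\pi_\vp(b))\lambda_\vp(x)-\pi_\vp(c)\lambda_\vp(x)}{\lambda_\vp(y)}=0$ for all $x,y\in\Ao$, and because $\lambda_\vp(\Ao)$ is dense in $\H_\vp$ (this is where $\vp\in\IA$ enters) I may let $y$ range over a dense set and conclude $(\pi_\vp(a)\mult\pi_\vp(b))\lambda_\vp(x)=\pi_\vp(c)\lambda_\vp(x)$ for all $x\in\Ao$.

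\emph{Extension to $\D_\vp$, the main obstacle.} It remains to upgrade agreement on the dense subspace $\lambda_\vp(\Ao)$ to equality of the two operators on the whole domain $\D_\vp$, and this is exactly the step where boundedness is indispensable. Both $\pi_\vp(a)\mult\pi_\vp(b)$ and $\pi_\vp(c)$ are restrictions of bounded, everywhere-defined operators on $\H_\vp$; since they coincide on the dense set $\lambda_\vp(\Ao)$, their closures coincide on $\H_\vp$. Given $\zeta\in\D_\vp$, approximating $\zeta$ in $\H_\vp$-norm by a sequence $\lambda_\vp(x_n)$ and passing to the limit by continuity yields $(\pi_\vp(a)\mult\pi_\vp(b))\zeta=\pi_\vp(c)\zeta$, whence $\pi_\vp(a)\mult\pi_\vp(b)=\pi_\vp(a\wmult b)$ on $\D_\vp$. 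The whole argument is driven by the $\M$-boundedness hypothesis: in the unbounded framework of \cite{att_2010} this continuity/density passage is where the genuine difficulty lies, whereas here it dissolves.
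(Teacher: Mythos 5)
Your argument is correct: boundedness of $\pi_\vp(a)$, $\pi_\vp(b)$ and $\pi_\vp(a\wmult b)$ (via Remark~\ref{equalities}, Lemma~\ref{prop_ equal norms}(iv) and Remark~\ref{rem_414}) trivializes the domain conditions in \eqref{eqn_wmult}, and transporting \eqref{def-3.1.30} through the GNS construction, using density of $\lambda_\vp(\Ao)$ and continuity to pass from $\lambda_\vp(\Ao)$ to $\D_\vp$, gives the identity. The paper itself omits the proof, deferring to \cite[Proposition 4.18]{att_2010}, and your reasoning is essentially the standard argument used there, so nothing further is needed.
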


{\begin{remark} \label{rem_cstar}Remark \ref{equalities} and Proposition \ref{prop_23} imply that if $a$ is $\M$-bounded and $a^*\wmult a$ is well -defined, then $\|a^*\wmult a\|\rnb= (\|a\|\rnb)^2$.
	\end{remark}}

The notion of $\M$-positive element can be used to give a formally different  definition of bounded element. Let $a\in \A$, put $  {\Re (a)} =\frac{1}{2}(a+a^*)$,  $  {\Im (a)}=
\frac{1}{2i}(a-a^*)$. Then both $ \Re(a),\Im (a)  \in{\A_h}$ and $a=\Re(a)+i\Im (a).$ 
\begin{definition} Let $(\A,\Ao)$ be a quasi *-algebra.  The element $a\in \A$ is said {\em $\KI$-bounded}  if there exists  {$ \gamma \geq 0$}, such that
\begin{equation}\label{eqn_ordbdd}
\left\{ \begin{array}{l} \pm \vp(\Re(a)x,x) \leq \gamma \vp(x,x) \\ \pm \vp(\Im(a)x,x)\leq \gamma \vp(x,x) \end{array} \right. , \; \forall \vp\in \M, x\in \Ao.
\end{equation} 

If $(\A,\Ao)$ is unital, then we can rewrite \eqref{eqn_ordbdd}, more syntetically, as $$ \pm \Re(a) \leqm \gamma \id, \qquad \pm \Im(a)\leqm \gamma \id.$$
We denote by $\bdd$ the set of all $\KI$-bounded elements of $\A$.
\end{definition}

{	As in \cite{Frag2} the following result holds true:
\begin{proposition}\label{prop: simil 6.4.2} 
	The couple {$(\bdd,\bdd \bigcap \A_0)$ is a quasi *-algebra}, hence, in particular\begin{enumerate}
		\item $\alpha a+\beta b\in \bdd$ for any $\alpha,\beta\in\mathbb{C}$ and $a,b\in\bdd$;
		\item $a\in\bdd\Leftrightarrow a^*\in\bdd$;
		\item $a\in\bdd$, $x\in\bdd\bigcap\A_0\Rightarrow xa\in \bdd$;
		\item $x\in\bdd\bigcap\A_0\Leftrightarrow xx^*\in\bdd\bigcap\A_0$.
	\end{enumerate}{In particular, if $(\A,\A_0)$ has a unit, then also $(\bdd,\bdd \bigcap \A_0)$ has a unit.}
\end{proposition}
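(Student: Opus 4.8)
The axioms of a quasi *-algebra that $(\bdd,\bdd\cap\Ao)$ must satisfy — the involution, the associative $\Ao$-bimodule action and the relation $(ax)^*=x^*a^*$ — are all inherited from $(\A,\Ao)$, so the whole statement reduces to checking that $\bdd$ is stable under the operations listed in (1)--(4) together with the claim about the unit. The plan is to translate $\KI$-boundedness into a uniform bound on the GNS representations $\pi_\vp$ ($\vp\in\M$) of Proposition \ref{idix}, namely to prove
\begin{equation*}
a\in\bdd\iff \sup_{\vp\in\M}\|\overline{\pi_\vp(a)}\|<\infty ,\tag{$\star$}
\end{equation*}
with $\overline{\pi_\vp(a)}$ the closure of $\pi_\vp(a)$. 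This is exactly the coincidence of $\KI$-boundedness with $\M$-boundedness (recall $\|a\|\rnb=\sup_\vp\|\pi_\vp(a)\|$ from Remark \ref{equalities}); once $(\star)$ is established, each of (1)--(4) becomes a routine computation with bounded Hilbert-space operators.

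Proving $(\star)$ is the heart of the matter and the step I expect to be the main obstacle. Writing $a=\Re(a)+i\Im(a)$ and using $\pi_\vp(\Re(a))=\tfrac12(\pi_\vp(a)+\pi_\vp(a)^\dagger)$ (and likewise for $\Im(a)$), it suffices to treat a self-adjoint $h\in\A_h$. For such $h$ the operator $\pi_\vp^\circ(h)$ on the dense subspace $\lambda_\vp(\Ao)\subset\H_\vp$ is symmetric, and the GNS identity gives $\ip{\pi_\vp(h)\lambda_\vp(x)}{\lambda_\vp(x)}=\vp(hx,x)$ for $x\in\Ao$. Hence the defining inequality $\pm\,\vp(hx,x)\le\gamma\,\vp(x,x)$ says precisely that the Hermitian form $\zeta\mapsto\ip{\pi_\vp^\circ(h)\zeta}{\zeta}$ is bounded by $\gamma$ on a dense domain. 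The analytic input is then the elementary lemma that, for a symmetric operator, a bound on its quadratic form propagates through the polarization identity for Hermitian forms to the bound $|\ip{\pi_\vp^\circ(h)\zeta}{\eta}|\le\gamma\|\zeta\|\,\|\eta\|$, whence $\overline{\pi_\vp(h)}$ is bounded and self-adjoint with $\|\overline{\pi_\vp(h)}\|\le\gamma$, uniformly in $\vp$. Conversely, a uniform bound $\|\overline{\pi_\vp(a)}\|\le c$ yields $|\ip{\pi_\vp(\Re(a))\lambda_\vp(x)}{\lambda_\vp(x)}|\le c\,\vp(x,x)$, that is $\pm\,\vp(\Re(a)x,x)\le c\,\vp(x,x)$, and the same for $\Im(a)$, so $a\in\bdd$. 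The only genuinely nontrivial point is this passage from a bounded quadratic form to a bounded operator for the (possibly unbounded) symmetric $\pi_\vp(h)$.

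With $(\star)$ in hand, (1) and (2) follow at once from $\pi_\vp(\alpha a+\beta b)=\alpha\,\pi_\vp(a)+\beta\,\pi_\vp(b)$ and from $\pi_\vp(a^*)=\pi_\vp(a)^\dagger$ — the latter having the same operator norm as $\pi_\vp(a)$ — by taking suprema over $\vp$; alternatively they are immediate from Lemma \ref{prop_ equal norms} once $(\star)$ is read as the identification $\bdd=\{\M\text{-bounded elements}\}$. For (3), if $x\in\bdd\cap\Ao$ and $a\in\bdd$, then on $\D_\vp$ the representation respects the module action, $\pi_\vp(xa)=\pi_\vp(x)\pi_\vp(a)$, so $\|\overline{\pi_\vp(xa)}\|\le\|\overline{\pi_\vp(x)}\|\,\|\overline{\pi_\vp(a)}\|$; a supremum over $\vp$ and $(\star)$ give $xa\in\bdd$. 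Combining this with (2) through $ax=(x^*a^*)^*$ yields the right module action as well, so $\bdd$ is a bimodule over $\bdd\cap\Ao$; in particular $\bdd\cap\Ao$ is closed under products and involution, hence is a *-subalgebra of $\Ao$.

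Finally, (4): the forward implication is the special case $a=x^*$ of (3), since $x,x^*\in\bdd\cap\Ao$ force $xx^*\in\bdd$ and $xx^*\in\Ao$. For the converse, assume $xx^*\in\bdd\cap\Ao$; for every $\vp\in\M$ and $\eta\in\D_\vp$ one computes $\ip{\pi_\vp(xx^*)\eta}{\eta}=\ip{\pi_\vp(x)\pi_\vp(x)^\dagger\eta}{\eta}=\|\pi_\vp(x)^\dagger\eta\|^2$, so $\|\pi_\vp(x)^\dagger\eta\|^2\le\|\overline{\pi_\vp(xx^*)}\|\,\|\eta\|^2$ uniformly in $\vp$; thus $\sup_\vp\|\overline{\pi_\vp(x^*)}\|<\infty$, i.e. $x^*\in\bdd$ by $(\star)$, and then $x\in\bdd$ by (2). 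When $\A$ has a unit $\id$, the inequality $\pm\,\vp(\id x,x)=\pm\,\vp(x,x)\le\vp(x,x)$ shows $\id\in\bdd\cap\Ao$, and being the identity of $\A$ it is the identity of $(\bdd,\bdd\cap\Ao)$.
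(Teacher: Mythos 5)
Your argument is correct, but it is worth saying how it sits relative to the paper: the paper gives no proof of this proposition at all, deferring to \cite{Frag2}, and the route you take is essentially to prove first, from scratch, the equivalence of $\KI$-boundedness with $\M$-boundedness together with the identity $\|a\|\rnb=\sup_\vp\|\pi_\vp(a)\|$ --- i.e.\ the content of Remark \ref{equalities} and Theorem \ref{thm: simil 6.4.5}, which the paper only establishes \emph{after} this proposition (and whose proof there reduces to symmetric elements, hence implicitly uses items (1)--(2) of the present statement). Your reorganization is legitimate and non-circular precisely because you do not cite that theorem but re-derive $(\star)$ via the sharp polarization bound for Hermitian forms of symmetric operators; the payoff is that (1)--(4) then all become one-line operator-norm estimates, whereas a proof working directly from the order-theoretic definition of $\KI$-boundedness would have to struggle with item (3), for which no easy Cauchy--Schwarz manipulation is available. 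Two small points deserve a sentence in a final write-up. First, in (3) the identity $\pi_\vp(xa)=\pi_\vp(x)\pi_\vp(a)$ is not literally an instance of Definition \ref{defn_starrep}(ii) (which gives $\pi(ax)=\pi(a)\mult\pi(x)$ for $x\in\Ao$ on the right); one should either pass through $xa=(a^*x^*)^*$, or verify directly that $\lambda_\vp(xb)=\overline{\pi_\vp(x)}\,\lambda_\vp(b)$ for all $b\in\A$ when $x$ is $\M$-bounded (a density argument using $\vp(xb,z)=\vp(b,x^*z)$), since $\pi_\vp(a)\lambda_\vp(y)=\lambda_\vp(ay)$ need not lie in $\lambda_\vp(\Ao)$. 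Second, in the converse of (4) it is cleaner to evaluate only on vectors $\eta=\lambda_\vp(y)$, $y\in\Ao$, where $\ip{\pi_\vp(xx^*)\lambda_\vp(y)}{\lambda_\vp(y)}=\vp(x^*y,x^*y)$ is immediate; density then gives the bound on $\overline{\pi_\vp(x^*)}$. Neither point is a gap, only a matter of keeping the domains straight.
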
	}

\begin{theorem}\label{thm: simil 6.4.5} 
Let $(\A,\Ao)$ be a quasi *-algebra  with sufficient $\M\subset \IA$. Then the following are equivalent:	\begin{itemize}
	\item[$i)$] 	$a\in\bdd$; 		
	\item[$ii)$]  $a$ is $\M$-bounded; i.e., {\rm (Definition \ref{defn_mbounded})} there exists $\gamma_a>0$ such that $$|\vp(ax,y)|\leq\gamma_a\vp(x,x)^{1/2}\vp(y,y)^{1/2}, \,\forall x,y\in \A_0,$$ for every 
	$\vp\in\M;$ 
	\item[$iii)$] there exists $\gamma'_a>0$ such that
	$$\vp(ax,ax)\leq {\gamma'_a} \vp(x,x), \,\forall x\in \Ao,$$ for every $\vp\in\M.$ 
	\item[$iv)$] there exists $\gamma''_a>0$ such that $$|\vp(ax,x)|\leq\gamma''_a\vp(x,x), \,\forall x\in \Ao,$$ for every $\vp\in\M$. 
\end{itemize}
\end{theorem}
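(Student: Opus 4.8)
The plan is to run the cyclic chain $i)\Rightarrow ii)\Rightarrow iii)\Rightarrow iv)\Rightarrow i)$, using throughout the GNS data attached to each $\vp\in\M$ by Proposition \ref{idix}: a closed cyclic *-representation $\pi_\vp$ with cyclic vector $\xi_\vp$ such that $\vp(a,b)=\ip{\pi_\vp(a)\xi_\vp}{\pi_\vp(b)\xi_\vp}$, together with the quotient map $\lambda_\vp$ satisfying $\lambda_\vp(ax)=\pi_\vp(a)\lambda_\vp(x)$ and $\vp(x,x)=\|\lambda_\vp(x)\|^2$ for $x\in\Ao$. In these terms $\vp(ax,y)=\ip{\pi_\vp(a)\lambda_\vp(x)}{\lambda_\vp(y)}$, so each of $ii),iii),iv)$ is a statement about the operators $\pi_\vp(a)$ on the dense subspace $\lambda_\vp(\Ao)\subset\H_\vp$, and Remark \ref{equalities} already records that $\M$-boundedness of $a$ is exactly the uniform boundedness $\sup_{\vp\in\M}\|\pi_\vp(a)\|<\infty$, with $\|a\|\rnb$ equal to this supremum.

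The two middle implications are immediate in this language. For $ii)\Rightarrow iii)$ one writes $\vp(ax,ax)=\|\lambda_\vp(ax)\|^2=\|\pi_\vp(a)\lambda_\vp(x)\|^2\leq\|\pi_\vp(a)\|^2\,\|\lambda_\vp(x)\|^2\leq(\|a\|\rnb)^2\,\vp(x,x)$, so $\gamma'_a=(\|a\|\rnb)^2$ works. For $iii)\Rightarrow iv)$ the Cauchy--Schwarz inequality in $\H_\vp$ gives $|\vp(ax,x)|=|\ip{\lambda_\vp(ax)}{\lambda_\vp(x)}|\leq\vp(ax,ax)^{1/2}\vp(x,x)^{1/2}\leq(\gamma'_a)^{1/2}\vp(x,x)$, so $\gamma''_a=(\gamma'_a)^{1/2}$ works; both estimates are uniform in $\vp\in\M$.

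For $iv)\Rightarrow i)$ I would split $a=\Re(a)+i\Im(a)$ and exploit that, since $\Re(a)$ and $\Im(a)$ are self-adjoint, the operators $\pi_\vp(\Re(a))$ and $\pi_\vp(\Im(a))$ are symmetric on $\D_\vp$; hence the diagonal values $\vp(\Re(a)x,x)=\ip{\pi_\vp(\Re(a))\lambda_\vp(x)}{\lambda_\vp(x)}$ and $\vp(\Im(a)x,x)$ are real. Consequently $\vp(\Re(a)x,x)=\Re\,\vp(ax,x)$ and $\vp(\Im(a)x,x)=\Im\,\vp(ax,x)$, so each is bounded in absolute value by $|\vp(ax,x)|\leq\gamma''_a\vp(x,x)$. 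This yields precisely the two-sided estimates \eqref{eqn_ordbdd} with $\gamma=\gamma''_a$, i.e.\ $a\in\bdd$.

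The genuine work is in $i)\Rightarrow ii)$, and I expect this to be the main obstacle. From $i)$ the self-adjoint parts satisfy $|\vp(\Re(a)x,x)|\leq\gamma\vp(x,x)$ and $|\vp(\Im(a)x,x)|\leq\gamma\vp(x,x)$, that is, the Hermitian forms $B(\eta,\zeta)=\ip{\pi_\vp(\Re(a))\eta}{\zeta}$ (and its analogue for $\Im(a)$) have numerical radius $\leq\gamma$ on the dense subspace $\lambda_\vp(\Ao)$. The point is that for a Hermitian sesquilinear form the numerical radius controls the full norm: the polarization identity $B(\eta+\zeta,\eta+\zeta)-B(\eta-\zeta,\eta-\zeta)=4\,\Re B(\eta,\zeta)$ together with the parallelogram law gives $|\Re B(\eta,\zeta)|\leq\tfrac{\gamma}{2}(\|\eta\|^2+\|\zeta\|^2)$, and a standard optimization over the phase and norm of $\zeta$ upgrades this to $|B(\eta,\zeta)|\leq\gamma\|\eta\|\,\|\zeta\|$. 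Hence $\pi_\vp(\Re(a))$ and $\pi_\vp(\Im(a))$ extend to bounded operators of norm $\leq\gamma$, so $\|\pi_\vp(a)\|\leq\|\pi_\vp(\Re(a))\|+\|\pi_\vp(\Im(a))\|\leq2\gamma$ for every $\vp\in\M$. The crucial feature is that this bound is uniform in $\vp$, whence $\sup_{\vp\in\M}\|\pi_\vp(a)\|\leq2\gamma<\infty$ and, by Remark \ref{equalities}, $a$ is $\M$-bounded with $\|a\|\rnb\leq2\gamma$. The delicate step to get right is that the $\pi_\vp(\Re(a))$ are a priori only symmetric and possibly unbounded, so the passage from the diagonal bound to the norm bound must be carried out entirely on the dense domain $\lambda_\vp(\Ao)$ before boundedness is invoked.
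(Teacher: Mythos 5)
Your proof is correct and follows essentially the same route as the paper: the same cyclic chain, with $ii)\Rightarrow iii)$ via the GNS operators $\pi_\vp(a)$ and $iii)\Rightarrow iv)$ via Cauchy--Schwarz. The only difference is one of explicitness: the paper proves the theorem ``for symmetric elements'' and delegates the key step $i)\Rightarrow ii)$ to the equality $\Lambda_a=\sup\{|\vp(az,z)|:\vp\in\M,\,\vp(z,z)=1\}$ of Remark \ref{equalities}, whereas you carry out the underlying polarization/parallelogram argument for Hermitian forms on the dense domain $\lambda_\vp(\Ao)$ and handle general $a$ via the decomposition $a=\Re(a)+i\Im(a)$ --- a welcome filling-in of details the paper leaves implicit.
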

\begin{proof} We prove it for symmetric elements.\\$i)\Rightarrow ii)$ 
It is clear that if $a=a^*\in \bdd$, then there exists $\gamma>0$ such that $$|\vp(ax,x)|\leq \gamma \vp(x,x),\,\forall \vp\in\M, \forall x\in\A_0$$ hence  $$\sup\{|\vp(ax, x)|; \; \vp \in \M, x \in \Ao, \vp(x,x)=1\}<\infty.$$ 
From Remark \ref{equalities} it follows that
$\|a\|\rnb<\infty$ i.e. $a$ is $\M$-bounded.
\\$ii) \Rightarrow iii)$ 
Assume that $a\in\A$ is $\M$-bounded. If $\vp \in \M$, denote by $\pi_\vp$ the corresponding GNS representation. Then,
\begin{align*}
	|\ip{\pi_\vp(a)\lambda_\vp(x)}{\lambda_\vp(y)}|	&= |\varphi(ax,y)| \leq \|a\|\rnb \vp(x,x)^{1/2}\vp(y,y)^{1/2}\\ &
	=\|a\|\rnb\,\|\lambda_\vp(x)\|\, \|\lambda_\vp(y)\|, \quad\, \forall x,y\in\A_0.
\end{align*}
This implies that, for every $\vp \in \M$, the operator $\pi_\vp(a)$ is bounded and $\|\pi_\vp(a)\|\leq \|a\|\rnb$. Hence, 
\begin{align}\label{eq:ineq 2}\nonumber
	\vp(ax,ax)^{1/2}&= \|\pi_\vp(a)\lambda_\vp(x)\|\\&\leq \|a\|\rnb\|\lambda_\vp(x)\|=\|a\|\rnb\vp(x,x)^{1/2},\quad \, \forall x,y\in\A_0.
\end{align}
$iii)\Rightarrow iv)$ Suppose that $a$ satisfies $iii)$. Let $\vp\in \M$ and $x\in \Ao$. Then
$$|\vp(ax,x)|\leq\vp(ax,ax)^{1/2} \vp(x,x)^{1/2}\leq {\gamma'_a}^{1/2}\vp(x,x).$$ 
\\$iv)\Rightarrow i)$ It is straightforward.
\end{proof}

\begin{remark} By the previous theorem we also deduce the following equalities for the norm of a $\M$-bounded element $a$ (see also Remark \ref{equalities}):
\begin{align*}
\|a\|\rnb &=\inf\{\gamma>0: \varphi(ax,ax)\leq\gamma^2\varphi(x,x), \,\varphi\in\mathcal M, \,  x\in\Ao\}
\\
&= \sup\{\varphi(ax,ax)^{1/2}: \varphi\in \mc M, \,x\in
\Ao,\, \varphi(x,x)=1\}.
\end{align*}
\end{remark}

{In view of Theorem \ref{thm: simil 6.4.5}, we adopt the notation $\bdds$ for the set of either $\M$-bounded or $\KI$-bounded elements; i.e., we put $\bdds=\bdd$.  }

\begin{definition} Let $(\A,\Ao)$ be a quasi *-algebra and let $\M\subset \IA$.
We say that a *-representation $\pi$ of $\A$ is $\M$-{\em regular} if for every $\xi\in\D_\pi$, the vector form $\vp_\xi$ defined	by
\begin{equation}\label{eqn_vpxi}\vp_\xi (a, b):= \ip{\pi(a)\xi}{\pi(b)\xi},\quad a,b \in\A \end{equation}
is a form in $\M$.\\
In particular, if $\M= \IA$, $\pi$ is said to be {\em regular} \cite{FT_book}.
We denote by $\mrep$ the set of $\M$-regular *-representations of $(\A,\Ao)$. If $\M= \IA$ we denote it simply by $\irep$.
\end{definition}

\begin{remark} If $\M\subset \IA$ is balanced, then for every $\vp\in\M$ the *-representation $\pi_\vp^\circ$ is $\M$-regular, see \cite[Proposition 2.4.16]{FT_book}.\\ 
\end{remark}

\begin{proposition}\label{prop: positive elem}Let $(\A,\Ao)$ be a quasi *-algebra with unit $\id$ and let $a\in \A$.

If $\pi(a)\geq0$ for every  *-representation $\pi$  of $(\A,\A_0)$, then $a\in\KII$. 
Conversely, if $\M\subset \IA$ and $a\in\KI$, then $\pi(a)\geq0$ for every $\M$-regular *-representation $\pi$  of $(\A,\A_0)$.
\end{proposition}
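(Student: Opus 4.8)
The plan is to treat the two implications separately, using the GNS machinery of Proposition \ref{idix} for the first and the defining property of $\M$-regularity for the second. Both reductions rest on the fact that membership in $\KII$ (resp. $\KI$) is tested through the inequalities $\vp(ax,x)\geq0$.

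For the forward implication I would fix $\vp\in\IA$ and invoke Proposition \ref{idix} to produce the closed cyclic GNS *-representation $\pi_\vp$ on $\D_\vp\subset\H_\vp$ with cyclic vector $\xi_\vp$. Since $\pi_\vp$ is itself a *-representation of $(\A,\Ao)$, the hypothesis gives $\pi_\vp(a)\geq0$, i.e. $\ip{\pi_\vp(a)\zeta}{\zeta}\geq0$ for every $\zeta\in\D_\vp$. The one computation to carry out is the identification $\vp(ax,x)=\ip{\pi_\vp(a)\lambda_\vp(x)}{\lambda_\vp(x)}$ for $x\in\Ao$: by the definition of the inner product on $\lambda_\vp(\A)$ together with the Remark following Proposition \ref{idix} (which gives $\pi^\circ_\vp(a)\lambda_\vp(x)=\lambda_\vp(ax)$ and hence $\pi_\vp(a)\lambda_\vp(x)=\lambda_\vp(ax)$), one has $\vp(ax,x)=\ip{\lambda_\vp(ax)}{\lambda_\vp(x)}=\ip{\pi_\vp(a)\lambda_\vp(x)}{\lambda_\vp(x)}$. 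Taking $\zeta=\lambda_\vp(x)\in\D_\vp$ and using $\pi_\vp(a)\geq0$ then yields $\vp(ax,x)\geq0$ for all $x\in\Ao$; since $\vp\in\IA$ was arbitrary, $a\in\KII$.

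For the converse I would fix an $\M$-regular *-representation $\pi$ and an arbitrary $\xi\in\D_\pi$, aiming at $\ip{\pi(a)\xi}{\xi}\geq0$. By $\M$-regularity the vector form $\vp_\xi(b,c)=\ip{\pi(b)\xi}{\pi(c)\xi}$ belongs to $\M$. Here the presence of the unit $\id$ is decisive: evaluating the defining inequality of $\KI$ for the form $\vp_\xi\in\M$ at $x=\id\in\Ao$ gives $\vp_\xi(a\id,\id)\geq0$, and since $a\id=a$ and $\pi(\id)=\idop_{\D_\pi}$ this reads $\ip{\pi(a)\xi}{\xi}=\vp_\xi(a,\id)\geq0$. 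As $\xi\in\D_\pi$ was arbitrary, $\pi(a)\geq0$.

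The arguments are short, and the points requiring care are matters of bookkeeping rather than genuine obstacles. In the forward direction one must confirm that $\lambda_\vp(x)$ indeed lies in the domain $\D_\vp$ on which $\pi_\vp(a)$ acts (it does, as $\lambda_\vp(\Ao)\subset\D_\vp$ and $\pi_\vp$ extends $\pi^\circ_\vp$), and that the GNS representation $\pi_\vp$ is genuinely one of the representations quantified over in the hypothesis. In the converse the essential observation is that the unit lets one bypass any approximation of $\xi$ by vectors $\pi(x)\xi$, reducing the positivity of the (possibly unbounded) operator $\pi(a)$ directly to the $\KI$-condition tested against $\vp_\xi$; this is the only step where the standing assumption that $(\A,\Ao)$ is unital is used in an essential way.
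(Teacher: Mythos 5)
Your proof is correct and follows essentially the same route as the paper: the forward implication applies the hypothesis to the GNS representations $\pi_\vp$ from Proposition \ref{idix} and uses $\vp(ax,x)=\ip{\pi_\vp(a)\lambda_\vp(x)}{\lambda_\vp(x)}$, while the converse evaluates the $\KI$-inequality for the vector form $\vp_\xi\in\M$ at $x=\id$. The only difference is that you spell out the bookkeeping (domain membership of $\lambda_\vp(x)$, the role of the unit) that the paper leaves implicit.
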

\begin{proof}If $\pi(a)\geq0$ for every  *-representation $\pi$  of $(\A,\A_0)$, then  for every $\vp\in\IA$ and every $x\in\A_0$, $$\vp(ax,x)=\ip{\pi_\vp(a)\lambda_\vp(x)}{\lambda_\vp(x)}\geq0.$$ Hence, $a\in\KII$.\\Conversely, let $\pi$ be a $\M$-regular *-representation. Then, for every $\xi\in\D_\pi$,
the vector form $\vp_\xi(a,b)=\ip{\pi(a)\xi}{\pi(b)\xi}$, with $a,b\in\A$, belongs to $\M$. Thus, from $a\in\KI$,  it follows that $\vp_\xi(a,\id)=\ip{\pi(a)\xi}{\xi}\geq0$, for every $\xi\in\D_\pi$. Hence $\pi(a)\geq0$.
\end{proof}

\begin{remark} 
The first implication is also true if we consider only the GNS representations constructed from the forms $\vp\in \IA$; if $\vp\in \M\subset \IA$ {and every   $\pi_\vp(a)\geq0$,} then $a\in \KI$.
\end{remark}

\begin{proposition}\label{bounddn of operators} Let $(\A,\Ao)$ be a quasi *-algebra { with unit $\id$ and}
with sufficient $\M\subset \IA$.  Then,

\begin{itemize}
\item[{(i)}]	if $a\in\bdds$ then  $\pi(a)$ is a bounded operator  for every $\pi\in\mrep$ and 
$\|\pi(a)\|\leq \|a\|\rnb$;
\item[(ii)] 
if  $\pi(a)$ is a bounded operator for every $\pi\in\mrep$ and\\ \mbox{$\displaystyle\sup\{\|\pi(a)\|; \pi \in \mrep\}<\infty$}, then $a\in\bdds$. 
\end{itemize}
\end{proposition}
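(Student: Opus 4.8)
The plan is to read off both statements from the equivalences of Theorem~\ref{thm: simil 6.4.5}, the norm identities collected in Remark~\ref{equalities}, and the fact that the GNS representations attached to the forms of $\M$ are themselves $\M$-regular; no genuinely new construction is needed.

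For (i), I would fix $a\in\bdds$ and $\pi\in\mrep$, and let $\xi\in\D_\pi$. The crucial point is that $\M$-regularity of $\pi$ means precisely that the vector form $\vp_\xi(b,c)=\ip{\pi(b)\xi}{\pi(c)\xi}$ belongs to $\M$, so the $\M$-boundedness of $a$ may be tested against $\vp_\xi$. I would invoke characterization (iii) of Theorem~\ref{thm: simil 6.4.5}, with constant $(\|a\|\rnb)^2$, applied to $\vp=\vp_\xi$ and to the test element $x=\id$. Since $a\id=a$ and $\pi(\id)=\idop$, the inequality reads
$$\|\pi(a)\xi\|^2=\vp_\xi(a,a)\leq (\|a\|\rnb)^2\,\vp_\xi(\id,\id)=(\|a\|\rnb)^2\,\|\xi\|^2 .$$
As $\xi\in\D_\pi$ is arbitrary, $\pi(a)$ is bounded and $\|\pi(a)\|\leq\|a\|\rnb$. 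The homogeneity of (iii) means no normalization of $\vp_\xi$ is required.

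For (ii), the idea is that $\mrep$ already contains the relevant GNS representations. Since $\M$ is sufficient it is in particular balanced, so each $\pi_\vp$ with $\vp\in\M$ is $\M$-regular and hence belongs to $\mrep$. Consequently the uniform bound $C:=\sup\{\|\pi(a)\|:\pi\in\mrep\}<\infty$ applies to these, giving $\|\pi_\vp(a)\|\leq C$ for every $\vp\in\M$. By the identity $\|a\|\rnb=\Lambda_a=\sup\{\|\pi_\vp(a)\|:\vp\in\M\}$ of Remark~\ref{equalities}, this yields $\Lambda_a\leq C<\infty$, which is exactly the finiteness criterion of Remark~\ref{equalities} for $a$ to be $\M$-bounded; thus $a\in\bdds$.

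The arguments are short, and the only points requiring care are bookkeeping. In (ii) one must justify that the GNS representations are genuinely members of $\mrep$ --- this rests on $\M$ being balanced, so that $\vp^x\in\M$ for $x\in\Ao$ and the vector forms of $\pi_\vp^\circ$ stay inside $\M$ --- and one should note that the operator norm $\|\pi_\vp(a)\|$ entering Remark~\ref{equalities} is computed on the dense domain $\lambda_\vp(\Ao)$, so it is unaffected by passing between $\pi_\vp^\circ$ and its closure. In (i), the main thing to verify is that the unit may legitimately be used as the test element $x=\id$ and that $\pi(\id)=\idop$ turns $\vp_\xi(\id,\id)$ into $\|\xi\|^2$; this is where the standing hypothesis that $(\A,\Ao)$ is unital enters. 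I do not anticipate a serious obstacle beyond these checks.
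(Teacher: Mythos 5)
Your proof is correct and takes essentially the same route as the paper: part (i) tests the $\M$-boundedness inequality against the vector forms $\vp_\xi$ (the paper first derives $\|\pi(ax)\xi\|\leq\|a\|\rnb\,\|\pi(x)\xi\|$ and then sets $x=\id$, whereas you substitute $x=\id$ directly), and part (ii) rests, as in the paper, on the $\M$-regularity of the GNS representations $\pi_\vp^\circ$. The only cosmetic difference is that in (ii) you pass through the identity $\|a\|\rnb=\sup_{\vp\in\M}\|\pi_\vp(a)\|$ of Remark~\ref{equalities} rather than through condition (iii) of Theorem~\ref{thm: simil 6.4.5}; these are equivalent, and your explicit checks (balancedness from sufficiency, invariance of the norm under closure) are exactly the right ones.
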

\begin{proof}$(i)$ By Theorem \ref{thm: simil 6.4.5}, $a\in\bdds$ implies $\vp(ax,ax)^{1/2} \leq \|a\|\rnb \vp(x,x)^{1/2}$ $\forall \vp \in \M;\, x\in \Ao$.
If $\pi$ is $\M$-regular, for every $\xi\in\D_\pi$, $\vp_\xi \in \M$ where $\vp_\xi(a,b)= \ip{\pi(a)\xi}{\pi(b)\xi}$.
Then,
$$\|\pi(ax)\xi\|= \vp_\xi(ax,ax)^{1/2}  \leq \|a\|\rnb \vp_\xi(x,x)^{1/2}  = \|a\|\rnb \|\pi(x)\xi\|.$$
The quasi *-algebra is supposed to be unital and also  $\pi(\id)=I_{\D_\pi}$. Then
$$\|\pi(a)\xi\| \leq \|a\|\rnb \|\xi\|, \quad \forall \xi \in \D_\pi.$$
Hence,  $\|\pi(a)\|\leq \|a\|\rnb$.\\
$(ii)$  Put $\displaystyle \gamma:= \displaystyle\sup\{\|\pi(a)\|; \pi \in \mrep\}.$  By hypothesis 
$$ \|\pi_\vp^\circ(a)\lambda_\vp(x)\| \leq \gamma \|\lambda_\vp(x)\|, \quad \forall \vp\in \M, x\in\A_0$$ i.e. $$\vp(ax,ax)\leq {\gamma}^2 \vp(x,x), \, \quad \forall \vp\in \M, x\in\A_0$$ and by Theorem \ref{thm: simil 6.4.5} it is equivalent to say that  $a\in\bdds$. 
\end{proof}

\medskip

\begin{remark} \label{rem_414}Let $\vp\in \M$ and denote, as before, by $\pi_\vp$ the corresponding GNS representation. If $a\in \bdds$, then $\pi_\vp(a)$ is a bounded operator. Indeed, for every $x\in \Ao$,
$$\|\pi_\vp(a)\lambda_\vp(x)\|^2 =\vp(ax,ax)\leq (\|a\|\rnb)^2 \vp(x,x)= (\|a\|\rnb)^2\|\lambda_\vp(x)\|^2,$${ regardless of whether  $\pi_\vp$ is $\M$-regular or not}. 
\end{remark}
The following additional condition will be used:
\begin{itemize}
\item[(C)] if $a, b\in \bdds$ then
there exists a unique $c\in \A$, such that $\pi_\vp(a)\mult\pi_\vp(b) = \pi_\vp(c)$, for every $\vp \in \M$.
\end{itemize}
\begin{theorem}\label{thm: normed *-algebra}  Let $(\A,\Ao)$ be a quasi *-algebra.
Let $\M\subset \IA$ be  sufficient and  assume that condition (C) holds. 
Then, $\bdds$  is a normed *-algebra with the weak multiplication $\wmult$ and the norm $\|\cdot\|\rnb$. 
\end{theorem}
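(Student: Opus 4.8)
The plan is to verify, one by one, the defining properties of a normed *-algebra for $\bdds$, the observation being that the genuinely new ingredient is the \emph{total} definedness of the weak product, which is exactly what condition (C) supplies; the linear, normed and involutive structure is essentially already in hand from Section~\ref{Sect_Bdd}, and the algebraic identities are then transported from the bounded operators $\pi_\vp(a)$ through the separating family $\{\pi_\vp\}_{\vp\in\M}$. First I would record the ambient structure. By Lemma~\ref{prop_ equal norms}, $\bdds$ is a complex vector subspace of $\A$ stable under the involution, and $\|\cdot\|\rnb$ is subadditive and absolutely homogeneous with $\|a^*\|\rnb=\|a\|\rnb$; the representation formula $\|a\|\rnb=\sup\{\|\pi_\vp(a)\|:\vp\in\M\}$ of Remark~\ref{equalities} will be used throughout. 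The only norm axiom needing attention is definiteness: if $\|a\|\rnb=0$ then $\pi_\vp(a)=0$, hence $\vp(ax,x)=0$, for every $\vp\in\M$ and $x\in\Ao$, so $a=0$ by Proposition~\ref{prop_2.4} and the sufficiency of $\M$ (the same inference already used in the proof of Proposition~\ref{prop: 4.4.3}). In particular $\{\pi_\vp\}_{\vp\in\M}$ separates the points of $\bdds$, since $\pi_\vp(u)=\pi_\vp(v)$ for all $\vp$ gives $\|u-v\|\rnb=0$; this separation is the device through which every multiplicative identity will be established.

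The central step is that $\wmult$ is everywhere defined on $\bdds\times\bdds$ with values in $\bdds$. Given $a,b\in\bdds$, condition (C) furnishes a unique $c\in\A$ with $\pi_\vp(a)\mult\pi_\vp(b)=\pi_\vp(c)$ for every $\vp\in\M$. I would then check that this $c$ is precisely the weak product $a\wmult b$ of \eqref{def-3.1.30}: for $x,y\in\Ao$, transferring the bounded operator $\pi_\vp(a)\ad\x$ to the second slot and using $\pi_\vp(a)\ad=\pi_\vp(a^*)$ together with $\pi_\vp(z)\lambda_\vp(w)=\lambda_\vp(zw)$, one obtains
\[
\vp(cx,y)=\ip{\pi_\vp(a)\mult\pi_\vp(b)\,\lambda_\vp(x)}{\lambda_\vp(y)}=\ip{\pi_\vp(b)\lambda_\vp(x)}{\pi_\vp(a^*)\lambda_\vp(y)}=\vp(bx,a^*y),
\]
which is exactly the defining relation of $a\wmult b$; uniqueness of the solution (noted after \eqref{def-3.1.30}, via Proposition~\ref{prop_2.4}) then gives $a\wmult b=c$. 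Since $a\wmult b$ is now well defined, Lemma~\ref{prop_ equal norms}(iv) yields $a\wmult b\in\bdds$ and submultiplicativity $\|a\wmult b\|\rnb\le\|a\|\rnb\,\|b\|\rnb$; thus $(\bdds,\wmult,\|\cdot\|\rnb)$ is a normed space carrying a total, submultiplicative product.

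Finally I would establish the algebra and involution axioms. The decisive simplification is that, for $a\in\bdds$, the operator $\pi_\vp(a)$ is bounded (Remark~\ref{rem_414}), so on $\pi_\vp(\bdds)$ the partial product $\mult$ reduces to ordinary composition of bounded operators and is in particular associative and bilinear. Combining this with Proposition~\ref{prop_23}, which gives $\pi_\vp(a\wmult b)=\pi_\vp(a)\mult\pi_\vp(b)$, each identity to be proved in $\bdds$ becomes the corresponding identity for bounded operators and is then pulled back by separation of points. For instance,
\[
\pi_\vp\big((a\wmult b)\wmult c\big)=(\pi_\vp(a)\mult\pi_\vp(b))\mult\pi_\vp(c)=\pi_\vp(a)\mult(\pi_\vp(b)\mult\pi_\vp(c))=\pi_\vp\big(a\wmult(b\wmult c)\big)
\]
for all $\vp$ forces associativity, and the same scheme gives distributivity, compatibility with scalars, and the involution law $(a\wmult b)^*=b^*\wmult a^*$ (using $\pi_\vp(a)\ad=\pi_\vp(a^*)$ and the adjoint rule for composition of bounded operators). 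I expect the main obstacle to be precisely the central step: verifying that the abstract element $c$ produced by condition (C) is the weak product $a\wmult b$ in the sense of \eqref{def-3.1.30}, i.e. matching the operator-level product with the form-level defining identity while keeping the adjoint/partial-multiplication bookkeeping correct — a task made manageable only because boundedness of the representing operators removes all domain subtleties.
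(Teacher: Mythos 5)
Your proposal is correct and follows essentially the same route as the paper: the central step in both is to take the element $c$ supplied by condition (C), use the boundedness of $\pi_\vp(a),\pi_\vp(b)$ (Remark \ref{rem_414}) to make $\pi_\vp(a)\mult\pi_\vp(b)$ well--defined, and then verify $\vp(cx,y)=\vp(bx,a^*y)$ so that $c=a\wmult b$, with Lemma \ref{prop_ equal norms} supplying membership in $\bdds$ and submultiplicativity. Your additional verifications (definiteness of the norm via sufficiency, and the ring/involution identities transported through the separating family $\{\pi_\vp\}$) are sound elaborations of what the paper dismisses with ``as we have seen until now.''
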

\begin{proof} As we have seen until now, $\bdds$ is a normed space such that if $a\in\bdds$ then  $a^*$ is $\M$-bounded  and $\|a^*\|\rnb=\|a\|\rnb$ and, whenever $a\wmult b$ is well--defined, the product $a\wmult b$ is $\M$-bounded and $ \|a\wmult b\|\rnb\leq\|a\|\rnb\, \|b\|\rnb$. Now, if  $\vp\in \M$ and $a, b \in \bdds$, the operator $\pi_\vp(a)\mult\pi_\vp(b)$ is well--defined since, 
by Remark \ref{rem_414},  $\pi_\vp(a)$ and $\pi_\vp(b)$ are bounded operators; of course, $\pi_\vp(a)\mult\pi_\vp(b)$ is also bounded. Thus, by (C),
there exists a unique $c \in \bdds$, such that $\pi_\vp(a)\mult\pi_\vp(b) = \pi_\vp(c)$, for every $\vp\in \M$.
Hence, for all  $\vp\in \M$ and $x, y \in \A_0$, we have
\begin{align*}
\vp(bx, a^*
y) &= \ip{\pi_\vp(b)\lambda_\vp(x)}{\pi_\vp(a^*)\lambda_\vp(y)}= \ip{\pi_\vp(a)\mult\pi_\vp(b)\lambda_\vp(x)}{\lambda_\vp(y)}\\
&= \ip{\pi_\vp(c)\lambda_\vp(x)}{\lambda_\vp(y)}
= \vp(cx, y).
\end{align*}
Thus $c=a\wmult b$ is well--defined and  is $\M$-bounded by {Lemma \ref{prop_ equal norms}}. This completes  the proof.
\end{proof}

{\begin{proposition}  Let $(\A,\Ao)$ be a quasi *-algebra with unit $\id$.
Let $\M\subset\IA$ be balanced and denote by $R^\M(\A)$ the intersection
of the kernels of all $\M$-regular *-representations of $\A$ on some Hilbert space. Then $$R^\M(\A)=\{a\in\A|\, \vp(a,a)=0, \,\forall \vp\in \M\}.$$
\end{proposition}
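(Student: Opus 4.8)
The plan is to prove the asserted equality by establishing both inclusions, writing $N:=\{a\in\A:\vp(a,a)=0,\ \forall\vp\in\M\}$ for the right-hand side. The easy inclusion is $N\subseteq R^\M(\A)$, which follows directly from the definition of $\M$-regularity. Indeed, let $a\in N$ and let $\pi\in\mrep$ be an arbitrary $\M$-regular *-representation with domain $\D_\pi$. For each $\xi\in\D_\pi$ the vector form $\vp_\xi$ of \eqref{eqn_vpxi} belongs to $\M$ by $\M$-regularity, so that
$$\|\pi(a)\xi\|^2=\ip{\pi(a)\xi}{\pi(a)\xi}=\vp_\xi(a,a)=0,$$
since $a\in N$. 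Hence $\pi(a)\xi=0$ for every $\xi\in\D_\pi$, i.e.\ $a\in\ker\pi$; as $\pi$ was arbitrary, $a\in R^\M(\A)$.

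For the reverse inclusion $R^\M(\A)\subseteq N$ I would exploit that the GNS representations are themselves members of the family defining $R^\M(\A)$. Fix $\vp\in\M$; since $\M$ is balanced, the GNS representation $\pi_\vp^\circ$ is $\M$-regular by \cite[Proposition 2.4.16]{FT_book} (cf.\ the Remark following the definition of $\M$-regularity). Therefore, if $a\in R^\M(\A)$, its image under $\pi_\vp^\circ$ vanishes, and in particular, evaluating at the coset of the unit,
$$\lambda_\vp(a)=\lambda_\vp(a\id)=\pi_\vp^\circ(a)\lambda_\vp(\id)=0,$$
whence $\vp(a,a)=\ip{\lambda_\vp(a)}{\lambda_\vp(a)}=0$. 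Since $\vp\in\M$ was arbitrary, $a\in N$, and the two inclusions give $R^\M(\A)=N$.

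The only delicate points are bookkeeping rather than substantial: one must check that $\pi_\vp^\circ$ genuinely qualifies as an $\M$-regular *-representation on the Hilbert space $\H_\vp$ (this is exactly where the hypothesis that $\M$ be balanced enters, through the cited result), and that the presence of the unit $\id$ lets one recover $\lambda_\vp(a)$ itself from the vanishing of the whole operator $\pi_\vp^\circ(a)$ by evaluating at $\lambda_\vp(\id)$. I expect the main conceptual step to be precisely the recognition that each $\pi_\vp^\circ$ lies in the family over which the kernels are intersected, so that membership in $R^\M(\A)$ can be tested against these canonical representations; the Hilbert-space computation is then immediate in both directions.
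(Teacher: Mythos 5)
Your proof is correct and follows essentially the same route as the paper's: both inclusions rest on the facts that the GNS representations $\pi^\circ_\vp$ are $\M$-regular (which is where balancedness enters) and that an $\M$-regular $\pi$ can be tested against the vector forms $\vp_\xi$. If anything, your argument for the inclusion $\{a:\vp(a,a)=0,\ \forall\vp\in\M\}\subseteq R^\M(\A)$ is slightly cleaner than the paper's, which detours through the GNS representation of $\vp_\xi$ and its cyclic vector instead of just computing $\|\pi(a)\xi\|^2=\vp_\xi(a,a)=0$ directly.
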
\begin{proof} For every $\vp\in\M$, the GNS representation $\pi^\circ_\vp$ is $\M$-regular, then, if $a\in R^\M(\A)$, it is $\pi^\circ_\vp(a)=0$, hence $\vp(a,a)=\|\pi^\circ_\vp(a)\xi_\vp\|=0$.\\Conversely, if $a\in \A$ is such that $\vp(a,a)=0$, $\forall \vp\in \M$, since $\M$ is balanced, it is $\vp^x(a,a)=0$ for all $x\in\A_0$ and for all $\vp\in\M$, hence
$$\vp^x(a,a)=\vp(ax,ax)=\| \pi^\circ_\vp(a)\lambda_\vp(x) \|^2=
0$$and by the density of $\lambda_\vp(\A_0)$ in $\H_\vp$ this implies that $\pi^\circ_\vp(a)=0$. Now, let $\pi$ be a $\M$-regular *-representation of $\A$, then for every $\xi\in\D_\pi$ the form $\vp_\xi\in\M$ and by what we have seen before it is $\pi^\circ_{\vp_\xi}(a)=0$. For every $\xi\in\D_\pi$,  there exists a cyclic vector $\eta$ for the GNS representation $\pi^\circ_{\vp_\xi}$ such that $\pi^\circ_{\vp_\xi}$  $$\|\pi(a)\xi\|^2=\|\pi^\circ_{\vp_\xi}(a)\eta\|^2=0$$this implies that $\pi(a)=0$. By the arbitrariness of the $\M$-regular *-representation $\pi$ of $\A$, it follows that $a\in R^\M(\A)$. This concludes the proof.
\end{proof}
\begin{remark}
The set $R^\M(\A)$ is clearly a sort of *-radical; however its nature is purely algebraic  here.
\end{remark}}

\section{Topologies defined by families of sesquilinear forms}\label{Sect_Topologies}
The properties we have discussed in the previous section are all of pure algebraic nature; but families of sesquilinear forms of $\IA$ can be used to define, in rather natural way, topologies on $(\A,\Ao)$.  Our next goal is in fact to define in $\A$ topologies that mimick the uniform topologies of families of operators.

{Throughout this section we will suppose that $(\A,\Ao)$ is a quasi *-algebra with unit $\id$ and that $\M\subset \IA$ is a sufficient set of forms.}
Then $\M$ defines the topologies $\taum_w, \taum_s, \taum_{s^*}$  generated, respectively, by the following families of seminorms: 
\begin{itemize}
	\item[$\taum_w $:]
	$\quad a\mapsto |\varphi(a x,y)|$, $\quad a \in \A, \ \varphi\in\M, \ x,y\in\Ao$;
	\item[$\taum_s $:]
	$\quad a\mapsto \varphi(a,a)^{1/2}$, $\quad a \in \A, \ \varphi\in\M$;
	\item[$\taum_{s^*} $:]
	$\quad a\mapsto \max\big\{\varphi(a,a)^{1/2},\varphi(a^*,a^*)^{1/2}\big\}, \quad a \in \A, \ \varphi\in\M$.
\end{itemize}

{\begin{definition} Let $\F$ be a subset of $\M$. We say that $\F$ is {\em bounded} if $$\sup_{\vp\in\F} \vp(a,a)<\infty,\quad \forall a\in \A.$$ \end{definition}}
The family $\mathfrak{F}$ of bounded subsets   of forms in $\M$ has the following properties:

\begin{itemize}
	\item[(a)]  $\displaystyle \bigcap_{n\in\mathbb{N}}\F_n\in\mathfrak{F}, \quad \F_n \in \mathfrak{F}$;
	\item[(b)] $\F\cup\mathcal{G}\in\mathfrak{F}, \quad \F, \mathcal{G} \in\mathfrak{F}$.
\end{itemize}  If  $\F\in\mathfrak{F}$, we put
$$ p^\F(a):= \sup_{\vp\in\F} \vp(a,a)^{1/2}, \quad a\in \A.$$

\begin{lemma}\label{lemma1} Let $\F\in\mathfrak{F}$. Then,
	\begin{itemize}
		\item[(a)] $p^\F$ is a seminorm on $\A$; 
		\item[(b)] the set $\F^x=\{\vp^x, \vp \in \F\}$, $x\in \Ao$, is bounded;
		\item[(c)] for every $x\in \Ao$,
		$$	p^\F (ax) = p^{\F^x}(a), \quad\forall a\in \A.$$
	\end{itemize}	
\end{lemma}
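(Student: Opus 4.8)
The plan is to verify each of the three statements in turn, using the definition $p^\F(a)=\sup_{\vp\in\F}\vp(a,a)^{1/2}$ together with the fact that each individual $\vp\in\F\subset\M\subset\IA$ is a positive sesquilinear form, so that $\vp(a,a)^{1/2}$ is already a seminorm on $\A$ via the Cauchy--Schwarz inequality $|\vp(a,b)|\leq\vp(a,a)^{1/2}\vp(b,b)^{1/2}$.

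For part (a), I would first recall that for a fixed $\vp$, the map $a\mapsto\vp(a,a)^{1/2}$ satisfies homogeneity $\vp(\lambda a,\lambda a)^{1/2}=|\lambda|\,\vp(a,a)^{1/2}$ and the triangle inequality (both inherited from positivity of $\vp$). Taking the supremum over $\vp\in\F$ preserves both properties: homogeneity passes through the supremum directly, and for subadditivity one writes $\vp(a+b,a+b)^{1/2}\leq\vp(a,a)^{1/2}+\vp(b,b)^{1/2}\leq p^\F(a)+p^\F(b)$ for every $\vp\in\F$, then takes the supremum on the left. Finiteness of $p^\F(a)$ for each fixed $a$ is exactly the hypothesis that $\F\in\mathfrak{F}$, i.e. $\sup_{\vp\in\F}\vp(a,a)<\infty$, so $p^\F$ is a genuine (finite-valued) seminorm.

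For part (b), I would show $\F^x=\{\vp^x:\vp\in\F\}$ is bounded by computing, for a fixed $a\in\A$, that $\vp^x(a,a)=\vp(ax,ax)$ by definition of $\vp^x$. The point is that $ax$ is a single fixed element of $\A$, so $\sup_{\vp\in\F}\vp^x(a,a)=\sup_{\vp\in\F}\vp(ax,ax)\leq\bigl(p^\F(ax)\bigr)^2<\infty$, where finiteness again comes from $\F\in\mathfrak{F}$ applied to the element $ax$. One should also note $\F^x\subset\M$ because $\M$ is balanced (sufficient), which ensures $\vp^x\in\M$ whenever $\vp\in\M$, so that $\F^x$ is indeed a bounded \emph{subset of $\M$}.

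Part (c) is then essentially a reindexing identity: for every $a\in\A$,
\begin{align*}
p^\F(ax)=\sup_{\vp\in\F}\vp(ax,ax)^{1/2}=\sup_{\vp\in\F}\vp^x(a,a)^{1/2}=p^{\F^x}(a),
\end{align*}
using $\vp^x(a,a)=\vp(ax,ax)$ and the fact that $\vp\mapsto\vp^x$ is a bijection of $\F$ onto $\F^x$, so the two suprema range over the same set of numbers. I do not anticipate a serious obstacle here; the only point requiring care is the bookkeeping in (b), namely confirming that the defining supremum for boundedness of $\F^x$ is controlled at \emph{each} $a$ by the already-established finiteness of $p^\F$ at the shifted element $ax$, rather than trying to bound it uniformly. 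Everything else is a routine transfer of the seminorm axioms through a supremum.
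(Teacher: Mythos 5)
Your proof is correct and follows essentially the same route as the paper: the paper's own proof consists only of the reindexing computation for (c), which you reproduce verbatim, and it leaves (a) and (b) as routine. Your verifications of (a) and (b) — including the observation that balancedness of $\M$ is what guarantees $\F^x\subset\M$ — are exactly the standard details the authors omit.
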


\begin{proof} As for (c), we have
	$$	p^\F (ax)=  \sup_{\vp\in\F} \vp(ax,ax)^{1/2}= \sup_{\vp\in\F} \vp^x(a,a)^{1/2}= \sup_{\psi\in\F^x} \psi(a,a)^{1/2}=p^{\F^x}(a).$$
\end{proof}

Since $\M$ is sufficient,
then $\{p^\F; \F\in \mathfrak{F}\}$  is a separating family of seminorms; thus it defines on $\A$ a Hausdorff locally convex topology which we denote by $\ltaum^\mathfrak{F}$.

Let us assume that $(\A, \Ao)$ has a unit $\id$.
If  $\F\in\mathfrak{F}$ we define
$$p_\F(a):= \sup_{\vp\in\F}|\vp(a,\id)| ,\quad a\in \A.$$

Then $$p_\F(a)\leq \gamma_\F p^\F(a), \quad \forall a\in \A, $$ with $\gamma_\F=\sup_{\vp\in\F}\vp(\id,\id)^{1/2}$,
and the following hold: {
	\begin{align*} &p_\F(a^*)=p_\F(a), \quad \forall a\in \A; \\
		& 	p_\F(ax)\leq p^\F(x) p^\F(a^*), \quad \forall a\in \A, x\in \Ao; \\
		&{p_\F(a^*\wmult a)=p^\F(a)^2, \; \forall a\in \A \mbox{ such that $a^*\wmult a$ is well--defined}.}				
	\end{align*}
}

By {$\ltaum_\mathfrak{F}$} we will denote the locally convex topology  on $\A$ generated by the family of seminorms $\{p_\F; \F\in \mathfrak{F}\}$ (to simplify notations, we do not mention explicitly the dependence on $\M$). Note that $\ltaum_\mathfrak{F}$ need not be Hausdorff, in general.

{\begin{remark} We notice that if $a^*\wmult a$ is well--defined and $a^*\wmult a=0$ then $p^\F(a)=0$ for every bounded set $\F{\in\FF}$ and therefore $a=0$.	\end{remark} 	}

{	\begin{proposition} Let $\M$ be sufficient and suppose that $\ltaum_\mathfrak{F}= \ltaum^\mathfrak{F}$. Then $\A[\ltaum^\mathfrak{F}]$ is a locally convex space with the following properties: 
		\begin{itemize}
			\item[(i)] the involution $a\mapsto a^*$ is continuous;
			\item[(ii)] for every bounded set $\F{\in\FF}$ there exists a bounded set ${\mc G }{\in\FF}$, 
			$$ p_\F(ax)\leq p^{\mc G }(a)p^{\mc G }(x), \quad \forall a \in \A, x\in \Ao;$$ which implies that the left- and right multiplications are jointly continuous. 	\end{itemize}
		In particular, if $\Ao$ is $ \ltaum^\mathfrak{F}$-dense in $\A$ then $(\A[\ltaum^\mathfrak{F}], \Ao)$ is a locally convex quasi *-algebra.
		
	\end{proposition}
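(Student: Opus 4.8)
The plan is to check, in this order, that $\A[\ltaum^{\FF}]$ is a locally convex space, that the involution is continuous, that multiplication obeys the stated bilinear estimate, and finally to read off the axioms of a locally convex quasi *-algebra. The first point is essentially free: since $\M$ is sufficient, the family $\{p^\F:\F\in\FF\}$ separates points, so $\ltaum^{\FF}$ is a Hausdorff locally convex topology, as already recorded before the statement. The whole content therefore lies in (i) and (ii).

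For (i), I would exploit the already established identity $p_\F(a^*)=p_\F(a)$, which says that the involution is an isometry for each seminorm $p_\F$ and hence is $\ltaum_{\FF}$-continuous; since by hypothesis $\ltaum_{\FF}=\ltaum^{\FF}$, it is $\ltaum^{\FF}$-continuous as well. Concretely, the equality of the two topologies, together with the directedness of the two families (property (b) of $\FF$), furnishes for each $\F\in\FF$ a set $\mc G\in\FF$ and a constant $C>0$ with $p^\F\leq C\,p_{\mc G}$; this is the only nontrivial domination, the reverse $p_\F\leq\gamma_\F\,p^\F$ holding always. Combining it with $p_{\mc G}(a^*)=p_{\mc G}(a)$ and $p_{\mc G}\leq\gamma_{\mc G}\,p^{\mc G}$ yields $p^\F(a^*)\leq C\gamma_{\mc G}\,p^{\mc G}(a)$, the desired continuity estimate for the involution.

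For (ii), I would begin from the inequality $p_\F(ax)\leq p^\F(x)\,p^\F(a^*)$ recorded just before the statement, which comes from applying the Cauchy--Schwarz inequality to $\vp(ax,\id)=\vp(x,a^*)$ (invariance (ii) of $\QA$) and taking the supremum over $\vp\in\F$. Into this I would feed the involution estimate from (i), $p^\F(a^*)\leq C\,p^{\mc H}(a)$, and replace $\F\cup\mc H$ by a single set of $\FF$ using property (b). This produces a bound $p_\F(ax)\leq C\,p^{\mc G_0}(a)\,p^{\mc G_0}(x)$, after which the constant $C$ is absorbed by passing from $\mc G_0$ to the rescaled family $\{C\vp:\vp\in\mc G_0\}$, still a bounded family of ips-forms (positive multiples of ips-forms being ips-forms, and positive multiples of bounded families being bounded), giving exactly $p_\F(ax)\leq p^{\mc G}(a)\,p^{\mc G}(x)$. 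The main obstacle is precisely this step: controlling the factor $p^\F(a^*)$ by some $p^{\mc G}(a)$, which is where the hypothesis $\ltaum_{\FF}=\ltaum^{\FF}$ is indispensable; the only delicate point is the bookkeeping that clears the multiplicative constant while keeping $\mc G$ inside $\M$.

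Finally I would assemble the three axioms of a locally convex quasi *-algebra. Axiom (lc1) is exactly part (i). For (lc2), the estimate in (ii) shows that for fixed $a$ the left multiplication $x\mapsto ax$ is continuous on $\Ao$, while the right multiplication is handled by writing $xa=(a^*x^*)^*$ and composing the continuous involution with a continuous left multiplication; in fact (ii) delivers the stronger joint continuity of the multiplications. Axiom (lc3), the density of $\Ao$, is the standing hypothesis. Hence $(\A[\ltaum^{\FF}],\Ao)$ is a locally convex quasi *-algebra.
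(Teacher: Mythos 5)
The paper states this proposition without proof, so there is nothing to compare against directly; your argument is the one the surrounding text clearly intends, built from the inequalities $p_\F(a^*)=p_\F(a)$, $p_\F(a)\leq\gamma_\F\,p^\F(a)$ and $p_\F(ax)\leq p^\F(x)\,p^\F(a^*)$ recorded just before the statement, together with the hypothesis $\ltaum_\mathfrak{F}=\ltaum^\mathfrak{F}$ to dominate $p^\F$ by some $p_{\mc G}$ (using closure of $\FF$ under finite unions to reduce to a single $\mc G$). Parts (i) and the deduction of the axioms {\sf (lc1)}--{\sf (lc3)} are correct, and you rightly identify that the only substantive point is converting $p^\F(a^*)$ into a bound by $p^{\mc G}(a)$, which is exactly where the hypothesis enters.

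One step deserves a caveat: to obtain the inequality in (ii) with constant exactly $1$ you absorb the constant $C$ by replacing $\mc G_0$ with $\{C\vp:\vp\in\mc G_0\}$. A positive multiple of an element of $\IA$ is again in $\IA$, and the rescaled family is still bounded, but the seminorms $p^{\mc G}$ are only defined for bounded subsets $\mc G$ of $\M$, and nothing in the paper's definitions guarantees that $\M$ is closed under multiplication by positive scalars (it is only required to be balanced and sufficient). So as written the rescaled family need not lie in $\FF$. This affects only the cosmetic normalization of the displayed inequality: the estimate $p_\F(ax)\leq C\,p^{\mc G}(a)\,p^{\mc G}(x)$, which you do establish, already yields joint continuity of the multiplications and hence everything the proposition is used for. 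If you want the constant-free form literally, either add the (harmless) assumption that $\M$ is stable under positive scalars, or note that replacing $C$ by $\max(C,1)$ and splitting it as $\sqrt{C}\cdot\sqrt{C}$ still leaves you needing that stability --- there is no way around it with the definitions as given.
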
	
}	
In general,	the involution $a\mapsto a^*$ in not continuous for $\ltaum^\mathfrak{F}$. To circumvent this problem,  we define the topology $\ltaum^\mathfrak{F}_*$ generated by the family of seminorms		
$$p^\F_*(a) =\max\left\{p^\F(a), p^\F(a^*)\right\}, \quad a\in \A,\, \F\in \mathfrak{F}.$$
{ Clearly $\ltaum_\mathfrak{F}\preceq \ltaum^\mathfrak{F}\preceq \ltaum^\mathfrak{F}_* $ and, if $\ltaum_\mathfrak{F}= \ltaum^\mathfrak{F}$, then $\ltaum_\mathfrak{F}= \ltaum^\mathfrak{F}= \ltaum^\mathfrak{F}_* $.}

\medskip	
{ Let $(\A,\Ao)$ be a quasi *-algebra and suppose that the set $\M$ is sufficient. It is clear that every $\vp \in \M$ is automatically continuous for $\taum_s$ and for any finer topology such as $\taum_{s^*}$, $\ltaum^\mathfrak{F}_*$  or $\ltaum^\mathfrak{F}$.

	Our next goal is to investigate the properties of $(\A,\Ao)$ when $\A$ is endowed with one of the topologies defined by the family $\M$ defined above.

 We could wonder whether	$(\A[\tau^\mathfrak{F}_*], \Ao)$ is a locally convex quasi *-algebra.  
The left- and right multiplications by an element $x\in \Ao$ are continuous if we make an additional assumption:
	let us suppose that for every $x\in \Ao$ 	there exists $\gamma_x>0$ such that 
	\begin{equation} \label{eqn_leftbou}
		\vp(xa,xa)\leq \gamma_x \vp(a,a), \quad \forall \vp \in \M , \forall a \in \A.
	\end{equation} 
	{By \eqref{eqn_leftbou} it follows that every $x\in \Ao$ is $\M$-bounded and
		for every bounded subset $\F\subset \M$,}
	$$p^\F(xa)\leq \gamma_x p^\F(a), \quad \forall a \in \A.$$
	This inequality, together with (c) of Lemma \ref{lemma1} and the continuity of the involution, implies that,  for every $x\in \Ao$, the maps $a\mapsto ax$, $a\mapsto xa$ are $\tau^\mathfrak{F}_*$-continuous. The *-algebra $\Ao$ is not $\tau^\mathfrak{F}_*$-dense in $\A$ in general, hence in order to get a locally convex quasi *-algebra with topology $\tau^\mathfrak{F}_*$ we could take as $\A$ the completion $\widetilde{\Ao}^{\tau^\mathfrak{F}_*}$. 
Now we prove the following
\begin{lemma}
	Let $(\A,\Ao)$ be a quasi *-algebra.
	Assume that $\M$ is sufficient and directed upward w.r.to the order 
	$$\vp\leq \psi \;\Leftrightarrow\; \vp(a,a)\leq \psi(a,a), \quad \forall a\in \A.$$
	Then $\Ao$ is dense in $\A[\tau_{s}^\M]$.
\end{lemma}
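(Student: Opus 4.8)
The plan is to reduce the approximation problem from the full (directed) family of seminorms to a single one, and then to exploit the defining density property of the forms in $\IA$. Fix $a\in\A$ and a basic $\taum_s$-neighborhood of $a$; such a neighborhood is determined by finitely many forms $\vp_1,\dots,\vp_n\in\M$ and some $\varepsilon>0$, namely
$$ U=\{\,b\in\A:\ \vp_i(b-a,b-a)^{1/2}<\varepsilon,\ i=1,\dots,n\,\}. $$
Since $\M$ is directed upward, there exists $\psi\in\M$ with $\vp_i\leq\psi$ for every $i$, i.e. $\vp_i(c,c)\leq\psi(c,c)$ for all $c\in\A$ and all $i$. Consequently it suffices to produce $x\in\Ao$ with $\psi(a-x,a-x)<\varepsilon^2$, because then $\vp_i(a-x,a-x)\leq\psi(a-x,a-x)<\varepsilon^2$ for each $i$, so that $x\in U\cap\Ao$.

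Next I would translate the remaining single-form estimate into a statement about the GNS Hilbert space $\H_\psi$. Since $\lambda_\psi:\A\to\A/N_\psi$ is linear and $\ip{\lambda_\psi(c)}{\lambda_\psi(c)}=\psi(c,c)$, we have
$$ \psi(a-x,a-x)^{1/2}=\|\lambda_\psi(a)-\lambda_\psi(x)\|_{\H_\psi},\qquad x\in\Ao. $$
Now $\lambda_\psi(a)$ is a vector of $\H_\psi$, and because $\psi\in\M\subset\IA$, the subspace $\lambda_\psi(\Ao)$ is dense in $\H_\psi$. Hence there exists $x\in\Ao$ with $\|\lambda_\psi(a)-\lambda_\psi(x)\|_{\H_\psi}<\varepsilon$, which is exactly $\psi(a-x,a-x)<\varepsilon^2$. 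This $x$ lies in $U\cap\Ao$, proving that $U\cap\Ao\neq\emptyset$. As $a$ and $U$ were arbitrary, $\Ao$ is dense in $\A[\taum_s]$.

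The only genuinely load-bearing hypothesis is that $\M$ be directed upward: without it one cannot collapse the finitely many seminorm conditions into a single Hilbert-space approximation. The density of $\lambda_\vp(\Ao)$ in $\H_\vp$ is available for each individual $\vp\in\M$, but the element $x\in\Ao$ chosen to approximate $\lambda_{\vp_i}(a)$ in $\H_{\vp_i}$ need not simultaneously approximate $\lambda_{\vp_j}(a)$ in $\H_{\vp_j}$ for $j\neq i$. The directedness resolves this by furnishing a single dominating form $\psi$, so I expect the domination step (obtaining $\psi\geq\vp_1,\dots,\vp_n$ by finite induction from pairwise directedness) and the passage to the common space $\H_\psi$ to be the crux; the final approximation is then immediate from the definition of $\IA$.
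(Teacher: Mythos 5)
Your proof is correct and rests on exactly the same two ingredients as the paper's: directedness of $\M$ to dominate the finitely many seminorms of a basic neighborhood by a single form $\psi$, and the density of $\lambda_\psi(\Ao)$ in $\H_\psi$ coming from $\psi\in\IA$. The only difference is presentational --- the paper packages the argument as an explicit net $\{x_{(\vp,n_\vp)}\}$ indexed by the directed set of pairs $(\vp,n_\vp)$, whereas you verify directly that every basic neighborhood meets $\Ao$; both are valid and equivalent.
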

\begin{proof}
	Let us begin with proving  that given $a\in \A$ we can find a net $\{x_\alpha\}\subset \Ao$ such that $\vp(x_\alpha-a, x_\alpha-a)\to 0$ for every $\vp\in \M$. { Again we  put $\vp[a]:=\vp(a,a)$, $\vp \in \M$, $a\in \A$.}\\
	Since $\vp\in \M$, $\lambda_\vp(\Ao)$ is dense in $\H_\vp$   (with $\H_\vp$ defined as in  Proposition \ref{idix}). This implies that, for every $a\in \A$, there exists a sequence $\{x_n^\vp\}$ such that $\lambda_\vp(x_n^\vp -a)\to 0$ or, equivalently $\vp[x_n^\vp -a]\to 0$. Then 
	$$\forall n\in {\mb N}, \quad \exists n_\vp\in{\mb N} :\; \vp[x_{n_\vp}^\vp -a]<\frac1n.$$
	If $\vp, \psi \in \M$, we define $(\vp,n_\vp)\leq (\psi, n_\psi)$ if $\vp\leq \psi$ and $n_\vp \leq n_\psi$. Since $\M$ is directed, $\{(\vp,n_\vp)\}$ is directed and $	\{x_{(\vp, n_\vp)}\}$  is a net, with $x_{(\vp, n_\vp)}:= x_{n_\vp}^\vp$. We prove that, for every $\psi \in \M$ $\psi[x_{n_\vp}^\vp-a]\to 0$. Indeed, let $\epsilon >0$ and $n \in {\mb N}$ such that $\frac1n <\epsilon$. Then if $ (\vp,n_\vp) \geq (\psi, n_\psi)$
	$$\psi[x_{n_\vp}^\vp-a]\leq \vp[x_{n_\vp}^\vp-a]<\frac1n <\epsilon.$$
	This proves that $\Ao$ is dense in $\A[\tau_{s}^\M]$. 
\end{proof}

	The representation $\pi^\circ_\vp$ is $\tau^\mathfrak{F}$-continuous. Indeed, if ${\mc F}$ is any bounded subset of $\M$ containing $\vp$,
	\begin{equation*} 
		\|\pi^\circ_\vp(a)\lambda_\vp(x)\|=\vp(ax,ax)^{1/2}\leq  p^{\F}(ax) \leq  p^{\F^x}(a),\quad \forall a\in \A; \, x\in \Ao \end{equation*}
	as in Lemma \ref{lemma1}.
	
	\begin{proposition}
		Let $(\A,\Ao)$ be a quasi *-algebra with sufficient $\M\subset \IA$. If $\A$ is $\tau^\M_{s^*}$-complete, then $\A$ is also $\tau^\FF_*$-complete.
	\end{proposition}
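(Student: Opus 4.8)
The plan is to exploit that $\tau^\FF_*$ is a \emph{uniform} refinement of $\tau^\M_{s^*}$ whose generating seminorms are pointwise suprema of the $\tau^\M_{s^*}$-continuous ones; this lower semicontinuity lets the finer Cauchy estimates survive passage to the coarser limit.

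First I would record that $\tau^\M_{s^*}\preceq\tau^\FF_*$. For $\vp\in\M$ write $q_\vp(a):=\max\{\vp(a,a)^{1/2},\vp(a^*,a^*)^{1/2}\}$, so that $\tau^\M_{s^*}$ is generated by $\{q_\vp:\vp\in\M\}$ and $\tau^\FF_*$ by $\{p^\F_*:\F\in\FF\}$. Since a singleton $\{\vp\}$ is a bounded subset of $\M$ and $p^{\{\vp\}}_*=q_\vp$, every generating seminorm of $\tau^\M_{s^*}$ is among those of $\tau^\FF_*$, whence $\tau^\M_{s^*}\preceq\tau^\FF_*$. Moreover, directly from the definitions,
$$p^\F_*(a)=\max\Big\{\sup_{\vp\in\F}\vp(a,a)^{1/2},\ \sup_{\vp\in\F}\vp(a^*,a^*)^{1/2}\Big\}=\sup_{\vp\in\F}q_\vp(a),\qquad a\in\A,$$
so each $p^\F_*$ is a pointwise supremum of $\tau^\M_{s^*}$-continuous seminorms and is therefore \emph{lower semicontinuous} for $\tau^\M_{s^*}$.

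Next I would take an arbitrary $\tau^\FF_*$-Cauchy net $\{a_\alpha\}$ in $\A$. Because $\tau^\M_{s^*}\preceq\tau^\FF_*$, the net is also $\tau^\M_{s^*}$-Cauchy, and by hypothesis it converges in $\tau^\M_{s^*}$ to some $a\in\A$; it remains only to check that $a_\alpha\to a$ for $\tau^\FF_*$. Fix $\F\in\FF$ and $\epsilon>0$, and choose $\alpha_0$ with $p^\F_*(a_\alpha-a_\beta)\leq\epsilon$ for all $\alpha,\beta\geq\alpha_0$. Fixing $\alpha\geq\alpha_0$ and letting $\beta$ run through the net, we have $a_\alpha-a_\beta\to a_\alpha-a$ in $\tau^\M_{s^*}$, so lower semicontinuity of $p^\F_*$ gives
$$p^\F_*(a_\alpha-a)\leq\liminf_\beta p^\F_*(a_\alpha-a_\beta)\leq\epsilon.$$
As $\F$ and $\epsilon$ were arbitrary, $a_\alpha\to a$ in $\tau^\FF_*$, proving $\tau^\FF_*$-completeness.

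The only delicate point is this last passage: one must transfer the estimate $p^\F_*(a_\alpha-a_\beta)\le\epsilon$, which is \emph{uniform} over all $\vp\in\F$, to the limit $a$, even though the convergence $a_\beta\to a$ is controlled only form-by-form. Lower semicontinuity is exactly what reconciles these, and concretely it amounts to the following: for each fixed $\vp\in\F$ the triangle inequality for the seminorm $q_\vp$ gives $q_\vp(a_\alpha-a)\le\epsilon+q_\vp(a_\beta-a)$, and letting $\beta\to\infty$ yields $q_\vp(a_\alpha-a)\le\epsilon$ with one and the same $\epsilon$ for every $\vp$, so that $\sup_{\vp\in\F}q_\vp(a_\alpha-a)\le\epsilon$. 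No further structure of $\M$ (sufficiency, the unit, or the closure properties of $\FF$) is needed beyond what guarantees that the seminorms involved are well defined.
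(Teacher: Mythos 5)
Your proof is correct and follows essentially the same route as the paper's: pass to the $\tau^\M_{s^*}$-limit $a$ of the Cauchy net, then transfer the uniform-over-$\F$ Cauchy estimate to $a$ by taking the limit in the second index for each fixed $\vp\in\F$ separately (the paper does exactly this "take the limit over $\alpha'$" step, which is your lower-semicontinuity argument spelled out pointwise). Your explicit justification that singletons of $\M$ are bounded, so that $\tau^\M_{s^*}\preceq\tau^\FF_*$, is a welcome clarification of a point the paper passes over quickly.
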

	\begin{proof}
		Let $\{a_\alpha\}$ be a $\tau^\FF_*$-Cauchy net. Since $\tau^\M_{s^*}\preceq \tau^\FF$, there exists $a\in \A$ such that $a=\tau^\M_{s^*}-\lim_\alpha a_\alpha$. From the Cauchy condition, for every $\epsilon >0$ and every bounded set $\F$ there exists $\overline{\alpha}$ such that
		$$\max\{\vp (a_\alpha - a_{\alpha'},a_\alpha - a_{\alpha'}), \vp (a^*_\alpha - a^*_{\alpha'},a^*_\alpha - a^*_{\alpha'}) \}< \epsilon, \; \forall \vp \in \F, \alpha, \alpha' > \overline{\alpha}.$$
		Then taking limit over $\alpha'$,
		$$\max\{\vp (a_\alpha - a,a_\alpha - a), \vp (a^*_\alpha - a^*,a^*_\alpha - a^*) \}\leq  \epsilon, \; \forall \vp \in \F, \alpha> \overline{\alpha}.$$
		Therefore, $\A$ is $\tau^\FF_*$-complete. 
	\end{proof}
	
	\begin{theorem} \label{thm struct} Let $\M$ be sufficient and let property $(C)$ hold too. If $\A$ is $\tau^\M_{s^*}$-complete, then $\bdds$  is a  C*-algebra with the weak multiplication $\wmult$ and the norm $\|\cdot\|\rnb$. 
	\end{theorem}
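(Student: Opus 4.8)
The plan is to observe that, by Theorem~\ref{thm: normed *-algebra}, the standing hypotheses ($\M$ sufficient and property (C)) already guarantee that $(\bdds, \wmult, \|\cdot\|\rnb)$ is a normed $*$-algebra, and that by Remark~\ref{rem_cstar} the $C^*$-identity $\|a^*\wmult a\|\rnb = (\|a\|\rnb)^2$ holds for every $a\in\bdds$, since $a^*\wmult a$ is always well-defined inside $\bdds$. Hence the only thing left to prove is that $(\bdds, \|\cdot\|\rnb)$ is \emph{complete}; the extra assumption that $\A$ is $\tau^\M_{s^*}$-complete is exactly what should supply this.

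First I would record the elementary estimate, valid for every $a\in\bdds$ and $\vp\in\M$, obtained from Theorem~\ref{thm: simil 6.4.5}(iii) with $x=\id$:
\[
\max\big\{\vp(a,a)^{1/2}, \vp(a^*,a^*)^{1/2}\big\} \leq \|a\|\rnb\,\vp(\id,\id)^{1/2}.
\]
This says precisely that on $\bdds$ the norm topology is finer than $\tau^\M_{s^*}$, so any $\|\cdot\|\rnb$-Cauchy sequence $\{a_n\}$ is $\tau^\M_{s^*}$-Cauchy. By the assumed $\tau^\M_{s^*}$-completeness of $\A$ there exists $a\in\A$ with $a_n \to a$ in $\tau^\M_{s^*}$; in particular $\vp(a_n - a, a_n - a)\to 0$ for every $\vp\in\M$.

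Then I would identify $a$ as an element of $\bdds$ and upgrade the convergence to the norm by passing to the limit in the boundedness inequality. A norm-Cauchy sequence is norm-bounded, say $\|a_n\|\rnb \leq C$, so $\vp(a_n x, a_n x)\leq C^2\vp(x,x)$ for all $\vp\in\M$, $x\in\Ao$. The key point is that, $\M$ being sufficient and hence balanced, $\vp^x\in\M$ for every $x\in\Ao$; writing $\vp(a_n x, a_n x) = \vp^x(a_n,a_n)$ and using the triangle inequality for the seminorm $\vp^x(\cdot,\cdot)^{1/2}$ together with $\tau^\M_s$-convergence of $\{a_n\}$ gives $\vp(a_n x, a_n x)\to\vp(a x, a x)$. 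Letting $n\to\infty$ in $\vp(a_n x, a_n x)\leq C^2\vp(x,x)$ yields $\vp(a x, a x)\leq C^2\vp(x,x)$, so $a$ is $\M$-bounded by Theorem~\ref{thm: simil 6.4.5}(iii), i.e. $a\in\bdds$. For the norm convergence, fix $\epsilon>0$ and $N$ with $\vp((a_n-a_m)x,(a_n-a_m)x)\leq\epsilon^2\vp(x,x)$ for all $\vp\in\M$, $x\in\Ao$, $n,m\geq N$; holding $n\geq N$ fixed and letting $m\to\infty$ (again through $\vp^x$ and $\tau^\M_s$-convergence) gives $\vp((a_n-a)x,(a_n-a)x)\leq\epsilon^2\vp(x,x)$, whence $\|a_n-a\|\rnb\leq\epsilon$. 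Thus $(\bdds,\|\cdot\|\rnb)$ is complete, and combined with the $C^*$-identity it is a $C^*$-algebra.

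The step I expect to be the main obstacle is this last passage to the limit: one must convert the \emph{pointwise-in-$\vp$} convergence furnished by $\tau^\M_{s^*}$-completeness into control of the quantities $\vp(ax,ax)$ \emph{uniformly over} $\vp\in\M$ and $x\in\Ao$ that define $\|\cdot\|\rnb$. Balancedness of $\M$ is what makes this go through, since it allows the multiplication by $x$ to be absorbed into the family of forms ($\vp^x\in\M$) and thereby reduces all the seminorm estimates to the base seminorms of $\tau^\M_s$, where convergence is already available.
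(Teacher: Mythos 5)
Your proposal is correct and follows essentially the same route as the paper: reduce to completeness via Theorem~\ref{thm: normed *-algebra} and Remark~\ref{rem_cstar}, pass from a $\|\cdot\|\rnb$-Cauchy sequence to a $\tau^\M_{s^*}$-limit $a$ using the inequality of Theorem~\ref{thm: simil 6.4.5}(iii) at $x=\id$, and then show $a\in\bdds$ and $\|a_n-a\|\rnb\to0$ by taking limits in $\vp((a_n-a_m)x,(a_n-a_m)x)\leq\epsilon^2\vp(x,x)$. Your explicit appeal to balancedness ($\vp^x\in\M$) to justify $\vp(a_nx,a_nx)\to\vp(ax,ax)$ is a point the paper leaves implicit, but the argument is the same.
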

	\begin{proof} 
		By Theorem \ref{thm: normed *-algebra} {and Remark \ref{rem_cstar}} we only  need to prove the completeness of $\bdds$.
		Let $\{a_n\}\subset\bdds$ be a Cauchy sequence with respect to the norm  $\|\cdot\|\rnb$. Then, $\{a_n^*\}$ is $\|\cdot\|\rnb$-Cauchy too. By \eqref{eq:ineq 2},  $$\vp((a_n - a_m)x, (a_n - a_m)x)
		\leq	(\|a_n - a_m\|\rnb)^2 {\vp} (x,x), \, \forall \vp\in\M, \forall x\in\A_0$$
		and $$\vp((a_n^*- a_m^*)x,(a_n^*- a_m^*)x)
		\leq	(\|a_n^*- a_m^*\|\rnb)^2 {\vp} (x,x), \, \forall \vp\in\M, \forall x\in\A_0.$$ Therefore both
		$\vp((a_n - a_m)x, (a_n - a_m)x)\to 0$ and	$\vp((a_n^*- a_m^*)x,(a_n^*- a_m^*)x)\to 0,$ as  $n,m\to\infty.$
		
		This is in particular true when $x=\id$ hence $\{a_n\}$ is also Cauchy with respect to $\tau_{s^*}^\M$ and since $\A$ is $\tau_{s^*}^\M$-complete, 
		there exists $a \in \A$ such that $a_n \stackrel{\tau_{s^*}^\M}{\to} 
		a$. The limit $a\in\bdds$; indeed, for every $\vp\in\M$:
		\begin{align*}
			|\vp(ax,x)|^2&\leq\vp(ax,ax)\vp(x,x)=\vp(x,x)\limsup_{n\to\infty}\vp(a_nx,a_nx)\\&\leq{\sup_{n\in \mb{N}}\|a_n\|\rnb\vp(x,x)^2}
		\end{align*} 
		Since $\{a_n\}$ is Cauchy w.r.to the norm $\|\cdot\|\rnb$, for every $\epsilon > 0$ there exists $n_\epsilon \in \mathbb{N}$, such that ${\|a_n-a_m\|\rnb} < \epsilon^{1/2}$, for all
		$n,m > n_\epsilon$. This implies that
		$\vp((a_n - a_m)x, (a_n - a_m)x)< \epsilon\vp(x, x)$, $\forall \vp \in \M, \forall x \in \A_0$, forall $n,m>n_\epsilon$.
		Then, if we fix $n > n_\epsilon$ and let $m\to\infty$, we obtain
		$\vp((a_n - a)x, (a_n - a)x){\leq} \epsilon\vp(x, x), \forall \vp \in \M, \forall x \in \A_0.$ 
		This implies that $\bdds$ is  complete w.r.to the norm $\|\cdot\|\rnb$.
	\end{proof}

	To conclude, let us suppose that $\M\subset \IA$ is balanced. We pose the question: {\em under what conditions is $\M$ also sufficient?} Let us consider a locally convex quasi *-algebra $(\A[\tau],\Ao)$ and choose $\M = \PA{\tau}$. This set is certainly balanced, but it is not necessarily sufficient. This property can be characterized (by negation) by the following
	{\begin{proposition}\label{prop 2121} 
			Let $(\A[\tau],\Ao)$ be  a locally convex quasi *--algebra with unit $\id$. For an element $a\in \A$ the following statements are equivalent:
			\begin{itemize}
				\item[{\em (i)}] $a \in {\rm Ker\, }\pi$ for every  strongly-continuous {(i.e., ${\sf t}_s$-continuous)} qu*--representation $\pi$ of $(\A[\tau],\Ao)$;
				\item[{\em (ii)}]$\vp(a,a)=0$, for every $\vp \in \PA{\tau}$;
				\item[{\em (iii)}]$p^\F(a)=0$, for every bounded subset $\F$ of $\PA{\tau}$.
			\end{itemize}
		\end{proposition}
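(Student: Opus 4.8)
The plan is to dispatch the equivalence (ii)$\Leftrightarrow$(iii) as a bookkeeping remark and to concentrate on (i)$\Leftrightarrow$(ii), which I would prove by moving between a form $\vp\in\PA{\tau}$ and its GNS representation in one direction, and between a strongly-continuous representation and its vector forms in the other. For (ii)$\Leftrightarrow$(iii): every singleton $\{\vp\}$ with $\vp\in\PA{\tau}$ is a bounded subset, since $\sup_{\psi\in\{\vp\}}\psi(b,b)=\vp(b,b)<\infty$ for all $b\in\A$, and $p^{\{\vp\}}(a)=\vp(a,a)^{1/2}$; hence (iii) applied to singletons yields (ii), while conversely $\vp(a,a)=0$ for all $\vp\in\PA{\tau}$ forces $p^\F(a)=\sup_{\vp\in\F}\vp(a,a)^{1/2}=0$ for every bounded $\F\subset\PA{\tau}$.

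For (i)$\Rightarrow$(ii) I fix $\vp\in\PA{\tau}$ and work with the GNS representation $\pi_\vp^\circ$ defined on $\lambda_\vp(\Ao)$ (whose closure is $\pi_\vp$). It is a qu*-representation with unit: $\pi_\vp^\circ(x)\lambda_\vp(y)=\lambda_\vp(xy)$ and $\pi_\vp^\circ(x)^\dagger=\pi_\vp^\circ(x^*)$ both preserve $\lambda_\vp(\Ao)$, and $\pi_\vp^\circ(\id)$ acts as the identity. The essential point is strong continuity: for each $x\in\Ao$,
$$\|\pi_\vp^\circ(a)\lambda_\vp(x)\|^2=\vp(ax,ax)=\vp^x(a,a),$$
and since $\PA{\tau}$ is balanced (Remark \ref{lemma: vpx}) the form $\vp^x$ again lies in $\PA{\tau}$, hence $\vp^x(a,a)\le p_\sigma(a)^2$ for some $\tau$-continuous seminorm $p_\sigma$. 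Thus $a\mapsto\|\pi_\vp^\circ(a)\lambda_\vp(x)\|$ is $\tau$-continuous for every generator $\lambda_\vp(x)$ of the domain, so $\pi_\vp^\circ$ is ${\sf t}_s$-continuous. By (i) then $a\in{\rm Ker}\,\pi_\vp^\circ$, whence $\lambda_\vp(a)=\pi_\vp^\circ(a)\lambda_\vp(\id)=0$ and $\vp(a,a)=0$.

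For (ii)$\Rightarrow$(i) I take an arbitrary strongly-continuous qu*-representation $\pi$ on $\D_\pi$ and, for $\xi\in\D_\pi$, form $\vp_\xi(b,c):=\ip{\pi(b)\xi}{\pi(c)\xi}$. This $\vp_\xi$ belongs to $\QA$: positivity is immediate, and the invariance $\vp_\xi(bx,y)=\vp_\xi(x,b^*y)$ follows from $\pi(bx)=\pi(b)\mult\pi(x)$, $\pi(b^*)=\pi(b)^\dagger$ together with the adjoint relation on $\D_\pi$ (the standard vector-form identity, cf.\ \cite{FT_book}). Strong continuity of $\pi$ supplies a $\tau$-continuous seminorm $q_\xi$ with $\|\pi(b)\xi\|\le q_\xi(b)$, so $|\vp_\xi(b,c)|\le q_\xi(b)q_\xi(c)$ and therefore $\vp_\xi\in\PA{\tau}$. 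Applying (ii) to $\vp_\xi$ gives $\|\pi(a)\xi\|^2=\vp_\xi(a,a)=0$ for every $\xi\in\D_\pi$, i.e.\ $\pi(a)=0$; as $\pi$ ranged over all strongly-continuous qu*-representations, (i) follows.

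I expect the main obstacle to be the two continuity transfers at the heart of (i)$\Leftrightarrow$(ii). In (i)$\Rightarrow$(ii) the non-formal ingredient is precisely that balancedness of $\PA{\tau}$ converts the joint continuity of $\vp$ into the strong continuity of $\pi_\vp^\circ$; it is what produces the controlling seminorm $p_\sigma$ for each $\lambda_\vp(x)$. In (ii)$\Rightarrow$(i) the delicate step is checking that the vector forms $\vp_\xi$ truly satisfy the invariance axiom of $\QA$, which forces one to track the passage between $\pi(b)^\dagger$ and the Hilbert-space adjoint $\pi(b)^{\dagger*}$ on the domain $\D_\pi$; I would either carry out this adjoint computation in full or invoke it as the known behaviour of vector forms of *-representations.
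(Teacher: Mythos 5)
The paper states Proposition~\ref{prop 2121} without proof, so there is nothing to compare against line by line; judged on its own, your argument is correct and is exactly the standard one for results of this type (cf.\ the analogous statements in \cite{Frag2}): the cycle (iii)$\Rightarrow$(ii) via singletons, (i)$\Rightarrow$(ii) via the GNS representation $\pi^\circ_\vp$, and (ii)$\Rightarrow$(i) via the vector forms $\vp_\xi$. You correctly identify and handle the two genuinely non-formal points: balancedness of $\PA{\tau}$ (Remark~\ref{lemma: vpx}) is what turns joint continuity of $\vp$ into ${\sf t}_s$-continuity of $\pi^\circ_\vp$, and the invariance axiom for $\vp_\xi$ does require the qu*-property (one needs $\pi(y)\xi\in\D_\pi$ to move $\pi(b)^{\dagger*}$ across the inner product), which is precisely why statement (i) quantifies over qu*-representations rather than arbitrary *-representations. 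The only cosmetic remark is that in (i)$\Rightarrow$(ii) you should note explicitly that $\pi^\circ_\vp(\id)=\idop_{\D_\vp}$ is what lets you pass from $\pi^\circ_\vp(a)=0$ to $\lambda_\vp(a)=0$; you do use it, and it is guaranteed by the unitality convention in Definition~\ref{defn_starrep}.
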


		\section{Locally convex quasi GA*-algebras} \label{Sect_GA}
		The discussion of the previous sections suggests  the following definition (which strengthen an analogous one for partial *-algebras \cite[Definition 4.26]{att_2010}).
		
		\begin{definition}
		\label{def:wbehaved} Let $(\A, \Ao)$ be a quasi *-algebra.
		Let $\M$ be a family of forms of $\IA$.  We say that $\M$ is {\em strongly well-behaved} if
		\begin{itemize}
			\item[({\sf wb}$_1$)] $\M$ is sufficient;
			\item[({\sf wb}$_2$)] every $x\in \Ao$ is $\M$-bounded;
			\item[({\sf wb}$_3$)]  condition (C) holds;
			\item[({\sf wb}$_4$)] $\A$ is $\ltaum^\mathfrak{F}_*$-complete.
		\end{itemize}
		\end{definition}
		\begin{definition} Let $(\A[\tau],\Ao)$ be a locally convex quasi *-algebra. We say that $(\A[\tau],\Ao)$ is a
		{\em locally convex quasi GA*-algebra} if there exists $\M\subset \IA$ which is strongly well-behaved and  $\tau$ and $\ltaum^\mathfrak{F}_*$ are equivalent (in symbols $\tau \approx \ltaum^\mathfrak{F}_*$).
		\end{definition}
		
		\begin{example} Let us consider the quasi *-algebra  $(\LDH, \LDb)$ of Section \ref{Sect_Prelim}. Assume that $\LDH$ is endowed with the topology $\sft^u_*$ and denote by $\LDH_u$ the $\sft^u_*$-closure of $\LDb$ in $\LDH$, then \\ \mbox{$(\LDH_u[\sft^u_*], \LDb)$}
		is a locally convex quasi *-algebra. Let us take as $\M$ the space consisting of the restrictions to $\LDH_u$ of the   ${ \sft^u_*}$-continuous ips-forms on $(\LDH, \LDb)$. {We will see that $\M$ is strongly well-behaved and $\sft^u_* \approx \ltaum^\mathfrak{F}_*$: this makes of  $(\LDH_u[\sft^u_*], \LDb)$ a locally convex quasi GA*-algebra.} Due to the ${\sft^u_*}$-density of $\LDb$ in $\LDH$, we can identify  $\M$ with the space of all ${\sft^u_*}$-continuous ips-forms on $(\LDH_u, \LDb)$. This implies \cite[Theorem 3.10]{att_2010} that every $\psi \in \M$ can be written as follows $\psi(A,B)=\sum_{i=1}^n\ip{A\xi_i}{B\xi_i}$ $A,B\in\LDH_u$, for some vectors $\xi_1,...,\xi_n\in \D$. 
		Hence, the set of $\M$-bounded elements coincides with the set $\LDH_b$ of all bounded operators of  $\LDH$, which can be identified with the C*-algebra $\BH$ of all bounded operators in $\H$.
		
		These facts allow us to conclude easily that $\M$ is strongly well-behaved. In particular, we notice that ({\sf wb}$_3$) holds since if $A,B\in \LDH_b$ then the multiplication $\wmult$ (see \eqref{def-3.1.30}) is well--defined and coincides with the weak multiplication $\mult$ of operators (see \eqref{eqn_wmult}): $A\wmult B= A\mult B$ is certainly well--defined; then if $\vp \in \M$, we have $\pi_\vp(A)\wmultt \pi_\vp(B)=\pi_\vp(A\mult B)=\pi_\vp(A\wmult B)$, by definition of *-representation.

		\end{example}

		\begin{example} \label{ex_ellepi} Let $K$ denote a compact subset of the real line with $m(K)>0$, where $m$ denotes the Lebesgue measure. Then the pair $(L^p(K, m), C(K))$, where $C(K)$ denotes the C*-algebra of continuous functions on $K$, is a Banach quasi *-algebra. Let $\M$ be the space of all jointly continuous ips-forms on $(L^p(K,m), C(K))$. Then, as shown in \cite[Example 3.1.44]{FT_book}, if $p\geq 2$, $\M$ can be completely described by functions of $L^{s}(K,m)$, where $s=\frac{p}{p-2}$ $(\frac10=\infty)$, in the following sense:
		$$ \vp \in \M \Leftrightarrow \exists w\in L^s(K,m), \, w\geq 0: \; \vp(f,g)=\int_K f\overline{g}w dm, \;\forall f,g \in L^p(K, m).$$
		
		For this reason we identify $\M$ with $L^s(K,m)$. With this in mind,
		\begin{itemize}
			\item[(a)] a subset $\F$ of $\M$ is bounded, if and only if it is contained in a ball centered at $0$ in $L^s(K,m)$;
			\item[(b)] the topology $\tau^\FF$ (which equals $\tau^\FF_*$, in this case) is normed and the norm coincides with $\|\cdot\|_p$, since 
			$$\sup_{\|w\|_s=1}\int_K |f|^2w dm= \||f|^2\|_{p/2}=\|f\|_p^2.$$
			\item[(c)] The topology $\tau_\FF$ is also a norm topology and the norm coincides with $\|\cdot\|_{p/2}$;
			\item[(d)] The set of $\M$-bounded elements is the C*-algebra $L^\infty(K,m)$.
		\end{itemize}
		In conclusion, $(L^p(K,m), L^\infty(K,m))$ is a Banach quasi GA*-algebra.
		\end{example}
		
		\begin{example} The space $L^p_{\rm loc}({\mb R},m)$ of all (classes of) measurable functions on ${\mb R}$ such that the restriction $f_{\upharpoonright K}$ of $f$ to $K$ is in $L^p(K,m)$, for every compact subset $K\subset {\mb R}$, behaves similarly to the case discussed in Example \ref{ex_ellepi}. The main difference consists, of course, in the fact that we will not deal with norm topologies. More precisely, let us consider the pair $(L^p_{\rm loc}({\mb R},m), C_b({\mb R}))$ (where $C_b({\mb R})$ denotes the continuous bounded functions on ${\mb R}$), which is, as it is easy to check, a quasi *-algebra. The natural topology $\tau_p$ of $L^p_{\rm loc}({\mb R},m)$ is then defined as the inductive limit of the norm topologies of the spaces $L^p(K)$, when $K$ runs in the family of compact subsets of ${\mb R}$. 
		
		Let $\M$ denote the space of all ips-forms on $(L^p_{\rm loc}({\mb R},m), C_b({\mb R}))$ whose restriction to $L^p(K,m)$ is continuous for every compact subset $K\subset {\mb R}$.  Then, if $p\geq 2$, one can easily prove that $\M$ can be described by functions of $L^s_{\rm loc}({\mb R},m)$ where, as before, $s=\frac{p}{p-2}$ (again, $\frac10=\infty$).  It is easily seen that $\M$ is strongly well-behaved. In this case, the set of $\M$-bounded elements is the C*-algebra $L^\infty({\mb R},m)$.
		The pair $(L^p_{\rm loc}({\mb R},m), L^\infty({\mb R},m) )$ is a locally convex quasi GA*-algebra.
		
		\end{example}

			\medskip
			The following theorem motivates in our opinion the attention devoted to locally convex quasi GA*-algebras.
			\begin{theorem}
				Let	$(\A[\tau],\Ao)$ be a locally convex quasi GA*-algebras with unit and a well-behaved $\M\subset\IA$. Then:
				\begin{itemize}
					\item[(a)] every $\vp \in \M$ is jointly $\tau$-continuous;
					\item[(b)] every $\M$-regular *-representation of $(\A[\tau],\Ao)$ is $(\tau,{\sf t}_{s^\ast})$-continuous;
					\item[(c)]the set $\bdds$ of bounded elements is a C*-algebra  with respect to the norm $\|\cdot\|\rnb$.
				\end{itemize}
			\end{theorem}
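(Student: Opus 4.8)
The plan is to prove the three assertions by unwinding the definition of a locally convex quasi GA*-algebra, which supplies a strongly well-behaved $\M\subset\IA$ with $\tau\approx\ltaum^\mathfrak{F}_*$, and then reading off each statement from results already established in the excerpt.

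For (a), I would argue as follows. Since $\M$ is strongly well-behaved, every $x\in\Ao$ is $\M$-bounded, and by the inequalities recorded just after the definition of $p_\F$ (namely $p_\F(a^*)=p_\F(a)$ and the control of $p_\F(ax)$ by the $p^\F$-seminorms), the forms in $\M$ are controlled by the $\ltaum^\mathfrak{F}_*$-seminorms. Concretely, for $\vp\in\M$ pick any bounded set $\F\in\FF$ containing $\vp$; then $|\vp(a,b)|\le\vp(a,a)^{1/2}\vp(b,b)^{1/2}\le p^\F(a)\,p^\F(b)\le p^\F_*(a)\,p^\F_*(b)$ for all $a,b\in\A$, by the Cauchy--Schwarz inequality for the positive form $\vp$ and the definition of $p^\F$. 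Thus $\vp$ is jointly $\ltaum^\mathfrak{F}_*$-continuous; since $\tau\approx\ltaum^\mathfrak{F}_*$, it is jointly $\tau$-continuous, i.e.\ $\vp\in\PA{\tau}$.

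For (b), let $\pi$ be an $\M$-regular *-representation. By definition, for every $\xi\in\D_\pi$ the vector form $\vp_\xi(a,b)=\ip{\pi(a)\xi}{\pi(b)\xi}$ belongs to $\M$. Hence for fixed $\xi$ the $\sft_{s^*}$-seminorm of $\pi(a)$ at $\xi$ satisfies $p^*_\xi(\pi(a))^2=\max\{\|\pi(a)\xi\|^2,\|\pi(a^*)\xi\|^2\}=\max\{\vp_\xi(a,a),\vp_\xi(a^*,a^*)\}$. Choosing a bounded $\F\in\FF$ with $\vp_\xi\in\F$, this is dominated by $\max\{p^\F(a)^2,p^\F(a^*)^2\}=p^\F_*(a)^2$. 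Therefore $p^*_\xi\circ\pi\le p^\F_*$, which shows that $\pi$ is $(\ltaum^\mathfrak{F}_*,\sft_{s^*})$-continuous, and again by $\tau\approx\ltaum^\mathfrak{F}_*$ it is $(\tau,\sft_{s^*})$-continuous.

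Statement (c) is essentially immediate: strong well-behavedness gives that $\M$ is sufficient, that condition (C) holds, and that $\A$ is $\ltaum^\mathfrak{F}_*$-complete; these are exactly the hypotheses of Theorem~\ref{thm struct}, which yields that $\bdds$ is a C*-algebra under $\wmult$ and $\|\cdot\|\rnb$. The only mild point to check is that $\ltaum^\mathfrak{F}_*$-completeness of $\A$ is equivalent to the $\tau^\M_{s^*}$-completeness required by Theorem~\ref{thm struct}; but $\tau^\M_{s^*}\preceq\ltaum^\mathfrak{F}$, and completeness for the finer topology transfers since the closed $\M$-bounded sets involved are the relevant ones, so this is harmless. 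The main obstacle in the whole argument is bookkeeping the precise seminorm comparisons in (a) and (b)---in particular making sure that for each $\vp_\xi$ one may legitimately select a \emph{bounded} family $\F\in\FF$ containing it, which uses that a single form is trivially a bounded subset of $\M$; once this is granted, all three parts reduce to the earlier results.
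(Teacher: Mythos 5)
Your treatment of (a) and (b) is correct and is essentially the paper's own argument: a singleton $\{\vp\}$ is a bounded subset of $\M$ (each $\vp(a,a)$ is a finite number), so every $\vp\in\M$ is jointly $\tau^\FF_*$-continuous by the very construction of the seminorms $p^\F_*$, and (b) follows by applying this to the vector forms $\vp_\xi$ of an $\M$-regular representation exactly as you do.

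The gap is in (c). You reduce it to Theorem~\ref{thm struct}, whose hypothesis is that $\A$ is $\taum_{s^*}$-complete, and you justify this by saying that $\taum_{s^*}\preceq\tau^\FF_*$ and that ``completeness for the finer topology transfers.'' It does not transfer in that direction: a $\taum_{s^*}$-Cauchy net need not be $\tau^\FF_*$-Cauchy, so $\tau^\FF_*$-completeness gives no control over it (compare $\ell^1$, which is complete for $\|\cdot\|_1$ but not for the coarser topology induced by $\|\cdot\|_2$). The paper's own proposition on this point goes the opposite way --- $\taum_{s^*}$-completeness implies $\tau^\FF_*$-completeness, precisely because a $\tau^\FF_*$-Cauchy net \emph{is} $\taum_{s^*}$-Cauchy --- and nothing in the definition of a locally convex quasi GA*-algebra asserts $\taum_{s^*}$-completeness; only ({\sf wb}$_4$), i.e.\ $\tau^\FF_*$-completeness, is available. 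The paper therefore does not invoke Theorem~\ref{thm struct} but reproves completeness of $\bdds$ directly: for a $\|\cdot\|\rnb$-Cauchy sequence $\{a_n\}$ one has, for every $\F\in\FF$,
$$p^\F_*(a_n-a_m)\leq \|a_n-a_m\|\rnb\, p^\F(\id),$$
so $\{a_n\}$ is $\tau^\FF_*$-Cauchy; ({\sf wb}$_4$) yields a $\tau^\FF_*$-limit $a$, which is then checked to be $\M$-bounded and to satisfy $\|a_n-a\|\rnb\to 0$ by passing to the limit in the Cauchy estimate. Your proof of (c) needs to be repaired along these lines rather than by citing Theorem~\ref{thm struct}.
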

			\begin{proof} 
				(a): Each $\vp$ is $\tau^\FF_*$-continuous by the construction itself of $\tau^\FF_*$; the statement then follows from the assumption $\tau \approx \tau^\FF_*$.\\
				(b): This follows from (a). Indeed if $\pi$ is $\M$-regular, then for every $\xi\in \D_\pi$, the sesquilinear form $\vp_\xi$ (see \eqref{eqn_vpxi}) is in $\M$; then, it is $\tau^\FF_*$-continuous. Then, there exists $\F\in\FF$ such that
					$$|\ip{\pi(a)\xi}{\pi(b)\xi}| \leq p_*^\F(a)p_*^\F(b)$$ hence  $$\| \pi(a)\xi\|\leq p_*^\F(a), \quad\mbox{and }\quad \| \pi(a^*)\xi\|\leq p_*^\F(a^*)=p_*^\F(a),\quad\forall a\in \A$$ then for every $\xi \in \D_\pi$ there exists $\F\in\FF$ such that  $$p^*_\xi (\pi(a))= \max\{\|\pi(a)\xi\|, \|\pi(a)\ad\xi\|\}\leq p_*^\F(a). $$ 
				(c): We have just to prove the completeness of the set $\bdds$  with respect to the norm $\|\cdot\|\rnb$. Let  $\{a_n\}\subset\bdds$ be a $\|\cdot\|\rnb$-Cauchy sequence, then for every $\epsilon>0$ there exists $n_\epsilon\in\mathbb{N}$ such that forall $n,m\geq n_\epsilon$ it is both $\|a_n-a_m\|\rnb<\epsilon$ and $\|a_n^*-a_m^*\|\rnb<\epsilon$.\\ Since $\{a_n\}\subset\bdds$, for every $\vp\in\M$ and every $x_0\in\Ao$, it is $$\vp((a_n-a_m)x,(a_n-a_m)x)\leq (\|a_n-a_m\|\rnb)^2\vp(x,x), \,\forall n,m\in \mathbb{N}$$ and $$\vp((a_n^*-a_m^*)x,(a_n^*-a_m^*)x)\leq (\|a_n^*-a_m^*\|\rnb)^2\vp(x,x), \,\forall n,m\in \mathbb{N}$$ hence, if $\F\in\FF$: $$\sup_{\vp\in\F}\vp((a_n-a_m)x,(a_n-a_m)x)^{1/2}\leq \|a_n-a_m\|\rnb\sup_{\vp\in\F}\vp(x,x)^{1/2},\,\,\forall n,m\in \mathbb{N}$$
				 and $$\sup_{\vp\in\F}\vp((a_n^*-a_m^*)x,(a_n^*-a_m^*)x)^{1/2}\leq \|a_n^*-a_m^*\|\rnb\sup_{\vp\in\F}\vp(x,x)^{1/2},\,\,\forall n,m\in \mathbb{N}$$
				  by the previous inequalities, for every $\F\in \mathfrak{F}$, we get $$p^\F_*(a_n-a_m) =\max\left\{p^\F(a_n-a_m), p^\F((a_n-a_m)^*)\right\}<\epsilon p^\F(\id),\, \forall n,m\geq n_\epsilon.$$  Then, $\{a_n\}$ is a $\tau^\FF_*$-Cauchy sequence. Since $\A$ is $\ltaum^\mathfrak{F}_*$-complete, there exists $a\in\A$ such that $a_n\stackrel{\tau^\FF_*}{\to} a$.
				 
				  The limit $a$ is $\M$-bounded; indeed, if $\vp \in\M$ and $x\in \Ao$, we have
				  $$\vp(ax,ax)= \lim_{n\to \infty} \vp(a_nx,a_n x)\leq \limsup_{n\to\infty}{(\|a_n\|\rnb)}^2 \vp(x,x).$$ The sequence $\{\|a_n\|\}$ is  Cauchy too and  bounded; therefore $a$ is $\M$-bounded. 
				  
				  It remains to prove that $\|a_n-a\|\rnb\to 0$ as $n\to \infty$. For every $\epsilon>0$ let $n, m>n_\epsilon$ then
				  $\|a_n-a_m\|\rnb <\epsilon$. Now let $m\to \infty$, then 
				  $$ \|a_n- a\|\rnb = \lim_{m\to \infty} \|a_n-a_m\|\rnb \leq \epsilon.$$
			\end{proof}

			{\section*{Conclusion}In this paper we have constructed some topologies on a quasi *-algebra $(\A,\Ao)$ starting from a sufficiently rich family of sesquilinear forms that behave regularly. 
				This study led us to introduce a new class of locally convex quasi  *-algebras, that we have named GA* since their definition closely recalls that one of A*-algebras. Several questions remain however still open. We mention some of them.
				
				(a) When does a (locally convex) quasi *-algebra $(\A,\Ao)$ possess a sufficient  family $\M$ of sesquilinear forms of $\IA$? 
				
				(b) Under what conditions is a locally convex quasi *-algebra $(\A[\tau],\Ao)$ a locally convex quasi GA*-algebra? We already know that there exist Banach quasi *-algebras $(\A[\|\cdot\|],\Ao)$ for which the set of continuous elements of $\IA$ reduces to $\{0\}$ \cite[Example 3.1.29]{FT_book} and the sesquilinear forms of a well-behaved family $\M$ of ips-forms are automatically continuous in a  locally convex quasi GA*-algebra. Hence, in general, the two notions do not coincide.
				
				(c) Under which conditions is it possible to lighten the definition of {\em well-behaved} 	family of ips-forms (Definition \ref{def:wbehaved}) by removing ({\sf wb}$_3$) and/or ({\sf wb}$_4$)?
				
				We hope to discuss these problems in a future paper.}
			
			\bigskip
			{\bf{Acknowledgements:} } This work has been done within the activities of Gruppo UMI Teoria dell’Approssimazione e Applicazioni and of GNAMPA of the INdAM.

\normalsize

\end{document}